\newtheorem{thm}{Theorem}[section]
\newtheorem{lem}[thm]{Lemma}
\newtheorem*{thm*}{Theorem}
\newtheorem{prop}[thm]{Proposition}
\theoremstyle{definition}
\newtheorem{defin}[thm]{Definition}
\newtheorem*{remark}{Remark}
\newcommand{\Hcountable}{\operatorname{H}_{\omega_1}}
\newcommand{\Hkappa}{\operatorname{H}_{\kappa}}
\newcommand{\Hcontinuum}{\operatorname{H}_{\mathfrak{c}^+}}
\newcommand{\ABE}{
	\operatorname{(A,B,E)}}
\newcommand{\ABEq}{
	\operatorname{(A_1,B_1,E_1)}}
\newcommand{\ABEqq}{
	\operatorname{(A_2,B_2,E_2)}}
\newcommand{\ABEE}{
	\operatorname{\langle A,B,E\rangle}}
\newcommand{\diamondABE}{\diamondsuit\operatorname{(A,B,E)}}
\newcommand{\diamondstrABEN}{\diamondsuit_{\operatorname{STR}}
	\operatorname{((A,B,E),N)}}
\newcommand{\diamondelABEN}{\diamondsuit_{\operatorname{EL}}\operatorname{((A,B,E),N)}}
\newcommand{\diamondstrABEcountable}{\diamondsuit_{\operatorname{STR}}
	\operatorname{(A,B,E)}}
\newcommand{\diamondelABEcountable}{\diamondsuit_{\operatorname{EL}}\operatorname{(A,B,E)}}
\newcommand{\diamondstrABEcontinuum}{\diamondsuit_{\operatorname{STR}}
	\operatorname{((A,B,E),\Hcontinuum)}}
\newcommand{\diamondelABEcontinuum}{\diamondsuit_{\operatorname{EL}}\operatorname{((A,B,E),\Hcontinuum)}}
\newcommand{\diamondstrABEkappaLL}{\diamondsuit_{\operatorname{STR}}
	\operatorname{((A,B,E),(\Hkappa;\LL))}}
\newcommand{\diamondelABEkappaLL}{\diamondsuit_{\operatorname{EL}}\operatorname{((A,B,E),(\Hkappa; \LL))}}
\newcommand{\phiABE}{\Phi
	\operatorname{(A,B,E)}}
\newcommand{\phicABE}{\Phi
	\operatorname{([A]^{\le\omega},B,E)}}
\newcommand{\diamondstareldH}
{\diamondsuit^*_{\operatorname{EL}}(\mathfrak{d},(\Hcountable; \in))}
\newcommand{\diamondelbH}{\diamondsuit_{\operatorname{EL}}(\mathfrak{b},(\Hcountable; \in))}
\newcommand{\diamondeldH}{\diamondsuit_{\operatorname{EL}}(\mathfrak{d},(\Hcountable; \in))}
\newcommand{\diamondelrH}{\diamondsuit_{\operatorname{EL}}(\mathfrak{r},(\Hcountable; \in))}
\newcommand{\diamondstarstrnonmeagre}{\diamondsuit^*_{\operatorname{STR}}(\operatorname{non}(\mathcal{M}), (\Hcountable; \in))}
\newcommand{\diamondstarstrnonnull}{\diamondsuit^*_{\operatorname{STR}}(\operatorname{non}(\mathcal{N}), (\Hcountable; \in))}
\newcommand{\diamondelnonmeagreH}{\diamondsuit_{\operatorname{EL}}(\operatorname{non}(\mathcal{M}),(\Hcountable; \in))}
\newcommand{\gameABE}{\mathcal{G}_{\operatorname{(A,B,E)}}(\mathcal{H},\mathcal{B})}
\newcommand{\LL}{\mathcal{L}}
\newcommand{\zfcm}{\operatorname{ZFC}^-}
\newcommand{\bad}{\operatorname{BAD}}
\newcommand{\good}{\operatorname{GOOD}}
\title{Guessing genericity -- looking at parametrized diamonds from a different perspective}
\author{Ziemowit Kostana\footnote{The author was supported by the European Research Council (grant agreement ERC-2018-StG 802756), and by the GAČR project EXPRO 20-31529X, RVO: 67985840.}\\
	Institute of Mathematics CAS, Prague\\
	Bar-Ilan University, Ramat Gan}
\date{}
\begin{document}
	
	\maketitle
	\begin{abstract}
		We introduce and study a family of axioms that closely follows the pattern of parametrized diamonds, studied by Moore, Hru\v{s}\'ak, and D\v{z}amonja in \cite{mhd}. However, our approach appeals to model theoretic / forcing theoretic notions, rather than pure combinatorics. The main goal of the paper is to exhibit a surprising, close connection between seemingly very distinct principles. As an application, we show that forcing with a measure algebra preserves (a variant of) $\diamondsuit(\mathfrak{d})$, improving an old result of M. Hru\v{s}\'ak.
	\end{abstract}
	
	{\bf Keywords:} parametrized diamond, Jensen's diamond, weak diamond, cardinal invariant, guessing principle \\
	{\bf MSC classification:} 03E17, 03E65

	\section{Introduction}
	
	The aim of this paper is to introduce an alternative approach to parametrized diamond principles. These axioms  are sufficient to construct numerous combinatorial objects, whose existence was originally known to follow from $\diamondsuit$. A common catchphrase is that parametrized diamonds measure a "degree" of $\diamondsuit$ in a similar manner as cardinal invariants of the real line measure "how much" of $CH$ is true. 
	
	The first axiom in this spirit, the \emph{weak diamond}, was studied by Devlin and Shelah in \cite{devlinshelah}, and turned out to be equivalent to a cardinal arithmetic statement $2^\omega<2^{\omega_1}$. Much later, a similar axiom $\diamondsuit_\mathfrak{d}$ was introduced in \cite{hrusak} in order to partially answer the question whether $\mathfrak{d}=\omega_1$ implies $\mathfrak{a}=\omega_1$. These two examples served as a ground for developing a more general theory in \cite{mhd}. These axioms, the \emph{parametrized diamonds}, have found numerous applications, for example: $\diamondsuit(\mathfrak{b})$ gives an maximal almost disjoint family of size $\omega_1$, $\diamondsuit(\operatorname{non}(\mathcal{M}))$ implies there is a Suslin tree, and $\diamondsuit(\mathfrak{r})$ gives a $\omega_1$-generated ultrafilter on $\omega$ (\cite{mhd}). Different variants of parametrized diamonds were also applied in, for example, \cite{union} and \cite{idealindependent} in connection with combinatorics of $[\omega]^\omega$, and in \cite{top1}, \cite{top2}, \cite{top3} for topological constructions. A certain different axiom, similar in the spirit, though not following the very same pattern, was applied in \cite{calderonfarah}.

	\section{Preliminaries}
	We denote the ideals of meagre and null subsets of reals by $\mathcal{M}$, and $\mathcal{N}$ respectively. For an uncountable cardinal $\kappa$, $\Hkappa$ is the family of all sets whose transitive closure has size $<\kappa$. We fix a well ordering $\prec^*$ of $\Hcontinuum$. 
	For $\kappa\ge \omega$ we denote by $\mathbb{B}_\kappa$ the forcing associated with the measure algebra on $2^\kappa$, and by $\mathbb{C}_\kappa$, the Cohen forcing on $2^\kappa$, that is
	$$\mathbb{C}_\kappa=\{p:\operatorname{dom}(p)\rightarrow2|\; \operatorname{dom}(p)\in [\kappa]^{<\omega}\}.$$
	We write $N\prec M$ for \emph{"$N$ is an elementary substructure of $M$"}. We write $s \sqsubseteq t$ for \emph{"$s$ is an initial segment of a sequence $t$"}, and $s \sqsubset t$ for \emph{"$s$ is a proper initial segment of a sequence $t$"}.
	
	Suppose $\LL$ a language extending $\{ \in \}$. A \emph{chain} of elementary submodels of $(\Hkappa; \LL)$ is a sequence of the form $\langle M_\alpha \mid \alpha<\lambda \rangle$, where $(M_\alpha; \LL) \prec (\Hkappa; \LL)$ for every $\alpha$, and $M_\beta = \bigcup_{\alpha<\beta} M_\alpha$ for every limit $\beta<\lambda$. When $\LL$ is clear from the context, we sometimes omit it.
	
	\section{Abstract invariants of the continuum}
	
	\begin{defin}[\cite{vojtas}] \label{invartiant}
		A triple $\ABE$ is called an \emph{invariant}, if 
		\begin{enumerate}
			\item $A,B \in \Hcontinuum$,
			\item $E \subseteq A \times B$,
			\item $\forall \; a \in A \; \exists \; b \in B \quad (a,b)\in E$,
			\item $\forall \; b \in B \; \exists \; a \in A \quad (a,b)\notin E$.
		\end{enumerate}
	\end{defin}
	We write $a\, E\, b$ instead of $(a,b) \in E$.
	
	\begin{defin}\label{sigma}
		If $(A,B,E)$ is an invariant, we define a associated invariant $([A]^{\le \omega},B,\sigma E)$ in which $\sigma E$ is defined by the condition
		$$a \; \sigma E \;b \iff \forall \; x \in a \quad x \, E \, b.$$
	\end{defin}
	
	\begin{defin}
		If $\ABE$ is an invariant, then its \emph{evaluation} is the cardinal
		$$
		\ABEE = \min\{|X|\mid X \subseteq B, \; \forall \; a \in A \; \exists \; x \in X \quad a\, E\, x \}.$$
	\end{defin}
	
	We will be mostly interested in the invariants from the following list, where $\mathcal{I} \in \{\mathcal{M},\mathcal{N}\}$. By $<^*$ we denote the order of eventual domination on $\omega^\omega$, i.e. $$f<^*g \iff \exists \; n<\omega \; \forall\; k>n \;\; f(k)<g(k).$$
	\begin{itemize}
		\item $\operatorname{cov}(\mathcal{I})=(\mathbb{R},\mathcal{I},\in)$,
		\item $\operatorname{non}(\mathcal{I})=(\mathcal{I},\mathbb{R},\not\ni)$,
		\item $\operatorname{cof}(\mathcal{I})=(\mathcal{I},\mathcal{I},\subseteq)$,
		\item $\operatorname{add}(\mathcal{I})=(\mathcal{I},\mathcal{I},\not\supseteq)$,
		\item $\mathfrak{b}=(\omega^\omega,\omega^\omega,\not>^*)$,
		\item $\mathfrak{d}=(\omega^\omega,\omega^\omega,<^*),$
		\item $\mathfrak{r}=(2^\omega,[\omega]^\omega,``\text{ restriction is eventually constant}").$
	\end{itemize}

	\begin{defin}[\cite{mhd}]
		If $\ABE$ is an invariant, we denote by $\phiABE$ the following sentence:
		$$\forall \; F:2^{<\omega_1}\rightarrow A \quad \exists \; g:\omega_1\rightarrow B \quad \forall \; f \in 2^{\omega_1}$$
		the set
		$$\{\alpha< \omega_1 \mid F(f\restriction \alpha)\;E\; g(\alpha)\}$$
		is stationary.
	\end{defin}
	
	To put $\phiABE$ into some context, $\phiABE$ is a consequence of $\diamondsuit$, for any invariant $\ABE$, and $\diamondsuit$ is equivalent to $\Phi(\mathbb{R},\mathbb{R},=)$. Also, the \emph{weak diamond} introduced in \cite{devlinshelah} is just $\Phi(2,2,=)$ (and is equivalent to $2^\omega<2^{\omega_1}$). However, a restricted variant of $\phiABE$ proved more fruitful in applications.
	
	\begin{defin}
		An invariant $\ABE$ is \emph{Borel} if $A,B$ are Polish spaces, and $E$ is a Borel subset of $A\times B$. A function $F:2^{<\omega_1}\rightarrow A$ is Borel, if every restriction $F\restriction 2^\alpha$, for $\alpha<\omega_1$ is Borel.
	\end{defin}
	
	\begin{defin}
		If $\ABE$ is a Borel invariant, by $\diamondABE$ we denote the following sentence:\\
		For every Borel function $F:2^{<\omega_1}\rightarrow A$, there exists $g:\omega_1\rightarrow B$ such that for all $f \in 2^{\omega_1}$,\\
		the set
		$$\{\alpha< \omega_1 \mid F(f\restriction \alpha)\; E\; g(\alpha)\}$$
		is stationary.
	\end{defin}

	\subsection{Fragments of set theory}
	
	In the forthcoming sections $N$ will always denote a transitive model of $\zfcm$, that is $\operatorname{ZFC}$ with the powerset axiom excluded, such that moreover $N \supseteq \Hcountable$. Only $N \subseteq \Hcontinuum$ will be relevant for us, but we don't need to assume this explicitly.
	
	Assume $N=(N;\LL)$ is such model in a relational language $\LL \supseteq \{\in\}$.  We denote by $\theta=\theta[(N;\LL)]$ the $\emph{truth predicate}$ for $(N;\LL)$, i.e. a relation such that for any formula $\phi$ in the language $\LL$ with $n$ free variables, and any $x_1,\ldots,x_n \in N$
	$$\theta(\phi,x_1,\ldots,x_n) \iff (N;\LL) \models \phi(x_1,\ldots,x_n).$$
	It is standard to verify that in the structure $(N;\LL,\theta)$ we can write the formula 
	$$\phi(m)="\text{\emph{$m$ is a countable elementary submodel of $(N;\LL)$}}".$$
	
	We also want to be able to write the formula "$m$ is an elementary submodel of $(N;\LL,\theta)$''. For this reason, we iterate the procedure, and define $\theta_{n+1}$ to be a truth predicate for $(N;\; \LL, \theta_0,\ldots\theta_n)$, where $\theta_0=\theta$.

	\begin{defin} An invariant $\ABE$ is \emph{definable over $N$} if the sets $A,\, B$, and $E$ are definable with parameters subsets of $(N;\in)$.
	\end{defin}
	Note that, by definition, every invariant is definable over $(\Hcontinuum; \in)$. Finally, we fix a bijection $\Sigma:2^\omega \rightarrow \Hcountable$, definable over $(\Hcountable; \in)$.

	\section{Games $\gameABE$ on transitive structures}
	
	\begin{defin}
		Let $\ABE$ be an invariant. Suppose that $\mathcal{H}=\langle h_\alpha \mid \alpha<\omega_1\rangle$ is a sequence of parameters from a model $N$, and $\mathcal{B}=\langle b_\alpha \mid  \alpha<\omega_1\rangle \subseteq B$. The game $\gameABE$ on $N$ is defined by the following rules:
		\begin{enumerate}
			\item in an even step $\alpha$, $\bad$ plays $\ast_\alpha \in \Hcountable$,
			\item in an odd step $\alpha+1$, $\good$ plays a countable elementary submodel $M_{\alpha+1}\prec N$, such that
			$$\{\ast_\alpha,h_\alpha, \langle M_\beta \mid \beta<\alpha, \text{ $\beta$ is odd}\rangle \} \in {M_{\alpha+1}},$$
			\item if $\beta$ is limit, then $\ast_\alpha$ must be definable from $\langle\ast_{2\cdot{\alpha}}\mid \alpha<\beta\rangle$ in the structure $N$, without additional parameters.
		\end{enumerate}
		
		$\good$ wins, if after all $\omega_1$ many steps the following set is stationary
		$$\{\alpha<\omega_1 \mid (A\cap M_{\alpha+1})\,\sigma E\, b_\alpha\}.$$
	\end{defin}
	
	Let us give some explanation of the underlying ideas of this definition. In the applications, the relation 
	$$(A\cap M_{\alpha+1})\; \sigma E\; b_\alpha$$
	will usually mean that $b_\alpha$ is a specific type of a generic real over $M_{\alpha+1}$ -- for example a Cohen, or a random real. It will be convenient to have a name for sequences satisfying 3. 
	
	\begin{defin}
		For a model $N$, a sequence $\langle x_\alpha \mid \alpha<\omega_1 \rangle$ is \emph{continuous} if for every limit $\beta<\omega_1$, $x_\beta$ is definable inside $N$ with $\langle x_{\alpha} \mid \alpha<\beta \rangle$ as the only parameter\footnote{Strictly speaking, $\ast_\alpha$ might be undefined for some $\alpha$, and in this case we make convention that $\ast_\alpha = \emptyset$.}
	\end{defin}
	
	The definition mentions two sequences -- $\mathcal H$ and $\langle \ast_{2 \cdot \alpha} \mid \alpha<\omega_1\rangle$. Why we need both of them? The sequence $\mathcal{H}$ plays a different role in two ways: firstly, $\mathcal{B}$ will depend on $\mathcal{H}$, whilst it will be independent of $\langle\ast_\alpha \mid \alpha<\omega_1\rangle$. Secondly, $\mathcal{H}$ doesn't have to be continuous. The continuity of parameters $\ast_\alpha$ is however necessary for $\good$ to have a chance to win: otherwise $\bad$ can just play $\ast_\alpha = b_\alpha$ every time. Then it is not possible that $b_\alpha$ is a generic real over $M_{\alpha+1}$, since $b_\alpha=\ast_\alpha \in M_{\alpha+1}$. 
	
	If $N=\Hcontinuum$, we can simplify the definition, by putting already the full sequence $\mathcal{H}$ into $M_1$. But in general $\mathcal{H}$ doesn't need to be an element of $N$, so the models $M_\alpha$ must cover it gradually. 
	
	We list some classical invariants that correspond to specific types of genericity.
	
	\begin{tabular}{ l | l}
		$\ABE$ & the relation \; $(A\cap M)\; \sigma E\; b$ \\
		\hline
		$\operatorname{non}(\mathcal{M})$ & $b$ is a Cohen real over $M$  \\
		$\operatorname{non}(\mathcal{N})$ & $b$ is a random real over $M$ \\
		$\mathfrak{d}$ & $b$ is a dominating real over $M$\\
		$\mathfrak{b}$ & $b$ is an unbounded real over $M$\\
		$\mathfrak{r}$ & $b$ is a reaping real over $M$\\
		$\operatorname{cov}(\mathcal{M})$ & $b$ is a meagre set such that $\mathbb{R} \cap M \subseteq b$\\
		$\operatorname{cov}(\mathcal{N})$ & $b$ is a null set such that $\mathbb{R} \cap M \subseteq b$\\
		$([\mathbb{R}]^\omega,\mathbb{R},\not=)$ & $b \in \mathbb{R}\setminus M$\\
		$(\mathbb{R},[\mathbb{R}]^\omega,\in)$ & $b\subseteq \mathbb{R}$ is countable, and $\mathbb{R}\cap M \subseteq b$\\
	\end{tabular}\\

	So, informally speaking, $\good$ is trying to ensure that the models $M_{\alpha+1}$ are:
	\begin{enumerate}
		\item Big enough to contain $\mathcal{H}$, and whatever $\bad$ wants them to contain,
		\item Small enough, so that the reals $b_\alpha$, \underline{given in advance}, are generic over the corresponding models.
	\end{enumerate}
	We will now introduce a principle saying that there is always a sequence of $b_\alpha$'s, so that $\good$ can win.
	\subsection{Strategic diamond}
	\begin{defin} Based on the preceding definition, we introduce the following combinatorial principles:
		\begin{itemize}
			\item $\diamondstrABEN$ says: 
			For each sequence of parameters $\mathcal{H}=\langle h_\alpha \mid \alpha<\omega_1\rangle \subseteq N$, there exists a sequence $\mathcal{B} = \langle b_\alpha \mid \alpha<\omega_1 \rangle \subseteq B$ such that $\good$ has a winning strategy in $\gameABE$ on $N$.
			
			\item $\diamondsuit_{\operatorname{STR}}^*{\operatorname{((A,B,E),N)}}$ says:     
			For each sequence of parameters $\mathcal{H}=\langle h_\alpha \mid \alpha<\omega_1\rangle \subseteq N$, there exists a sequence $\mathcal{B} = \langle b_\alpha \mid \alpha<\omega_1 \rangle \subseteq B$ such that $\good$ has a winning strategy in $\gameABE$ on $N$, ensuring moreover that the set
			$$\{\alpha<\omega_1 \mid (A\cap M_{\alpha+1})\; \sigma  E\; b_\alpha\}$$
			contains a club.
			\item The \emph{strategic diamond} $\diamondstrABEcountable$ says that $$\diamondsuit_{\operatorname{STR}}{\operatorname{((A,B,E),(\Hcountable;\in,\prec^*,\theta_0,\ldots,\theta_n)}}$$ holds for every integer $n$, where
			$\theta_0$ is a truth predicate for $(\Hcountable; \in,\prec^*)$, and $\theta_{n+1}$ is a truth predicate for $(\Hcountable;\in,\prec^*,\theta_0,\ldots,\theta_n))$.
		\end{itemize}
	\end{defin}
	
	It might be the case that $\bad$ plays consecutive elements of some continuous sequence $\langle\ast_\alpha \mid \alpha<\omega_1\rangle$ given in advance, regardless of the plays of $\good$. This situation gives rise to a principle we call \emph{elementary diamond}. In fact, at the current stage of research, there is no visible advantage of strategic diamond over a simpler -- and seemingly weaker -- elementary diamond, which we will now introduce.  
	\subsection{Elementary diamond}
	\begin{defin} We introduce the following principles:
		\begin{itemize}
			\item $\diamondelABEN$: 
			For every given sequence of parameters $\mathcal{H} = \langle h_\alpha \mid \alpha<\omega_1 \rangle \subseteq N$, we can choose a sequence $\mathcal{B} = \langle b_\alpha \mid \alpha<\omega_1 \rangle \subseteq B$, such that for any continuous sequence $\langle \ast_\alpha \mid \alpha<\omega_1\rangle \subseteq \Hcountable$ there exists a stationary set of indices $\beta<\omega_1$ for which exists a countable $M_\beta\prec N$ that satisfies:
			\begin{itemize}
				\item $\{\ast_\beta, \langle h_\alpha \mid \alpha \leq \beta \rangle \} \in M_\beta$,
				\item $(A\cap M_{\beta}) \; \sigma E \; b_\beta.$
				
			\end{itemize}
			
			\item $\diamondsuit_{\operatorname{EL}}^*{\operatorname{((A,B,E),N)}}$ is like $\diamondelABEN$, except that $M_\beta$ exists for a club set of $\beta<\omega_1$.
			
			\item The \emph{elementary diamond} $\diamondelABEcountable$ says that $$\diamondsuit_{\operatorname{EL}}{\operatorname{((A,B,E),(\Hcountable;\in,\prec^*,\theta_0,\ldots,\theta_n)}})$$ holds for every integer $n$, where
			$\theta_0$ is a truth predicate for $(\Hcountable; \in,\prec^*)$, and $\theta_{n+1}$ is a truth predicate for $(\Hcountable;\in,\prec^*,\theta_0,\ldots,\theta_n)$.
		\end{itemize}
	\end{defin}
	
	We will see that there is no loss of generality in assuming that the sets $\ast_\alpha$ are of the form $\ast_\alpha = X\cap \alpha$, for some $X \subseteq \omega_1$. The relations between $\diamondelABEN$, $\diamondstrABEN$, and instances of $\Phi\ABE$ will heavily depend on the structure $N$. 
	
	\begin{prop}
		For any invariant $\ABE$, and any model $N$, the following holds:
		$$\diamondstrABEN \iff \diamondsuit_{\operatorname{STR}}
		\operatorname{(([A]^{\le \omega},B, \sigma E),N)},$$
		$$\diamondelABEN \iff \diamondsuit_{\operatorname{EL}}
		\operatorname{(([A]^{\le \omega},B,\sigma E),N)}.$$
	\end{prop}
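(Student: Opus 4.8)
The plan is to show that for every transitive $N$ as in the hypotheses and every fixed pair of sequences $\mathcal{H},\mathcal{B}$, the game $\gameABE$ on $N$ coincides \emph{as a game} — same positions, same legal moves, same winning set — with the corresponding game for the invariant $([A]^{\le\omega},B,E)$ on $N$. Once this is established both equivalences drop out at once: $\diamondstrABEN$ asserts that for every $\mathcal{H}$ there is a sequence $\mathcal{B}$ from $B$ for which $\good$ has a winning strategy, and since the second coordinate of $([A]^{\le\omega},B,E)$ is again $B$, the ranges of the quantifiers and the statement ``$\good$ has a winning strategy'' are verbatim the same on both sides; $\diamondelABEN$ is compared in the same way directly against its definition (and the variants with ``club'' in place of ``stationary'' come along for free). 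Observe that the rules $(1)$--$(3)$ of $\gameABE$ refer only to $N$, to $\Hcountable$, and to $\mathcal{H}$ — never to the first coordinate $A$ nor to the relation $E$ — so the positions and legal moves of the two games are literally identical, and the only thing left to check is that the two winning conditions pick out the same subset of $\omega_1$.

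Thus everything reduces to the following fact about countable elementary submodels, which I would isolate as a claim: \emph{if $M\prec N$ is countable then} $A\cap M=\bigcup\bigl([A]^{\le\omega}\cap M\bigr)$, where $[A]^{\le\omega}$, like $A$, is computed in $N$. The inclusion ``$\subseteq$'' is immediate: for $x\in A\cap M$ the singleton $\{x\}$ lies in $M$ (as $M\prec N$ and $N\models\zfcm$) and $N$, hence $M$, recognises it as a finite subset of $A$, so $\{x\}\in[A]^{\le\omega}\cap M$ and $x\in\bigcup\bigl([A]^{\le\omega}\cap M\bigr)$. For ``$\supseteq$'', let $a\in[A]^{\le\omega}\cap M$ and $x\in a$; then $N\models$ ``$a$ is a countable subset of $A$'', so $M$ believes the same, hence $M$ contains a surjection $e\colon\omega\to a$ (or $a$ is finite). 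Since $\omega\subseteq M$ and $N$ is transitive, elementarity yields $e(n)\in M$ for every $n$ and $a\subseteq\operatorname{ran}(e)$, so $a=\{e(n):n<\omega\}\subseteq M$ and thus $x\in a\subseteq A\cap M$. Applying the claim with $M=M_{\alpha+1}$ and unwinding the convention ``$a\,E\,b$ iff $\forall x\in a\ \ x\,E\,b$'', extended to subsets of $[A]^{\le\omega}$, we get, along any run of the game,
$$A\cap M_{\alpha+1}\,E\,b_\alpha\iff[A]^{\le\omega}\cap M_{\alpha+1}\,E\,b_\alpha,$$
so the two sets $\{\alpha<\omega_1:\dots\}$ coincide, with them the two winning sets, and with them the two games.

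The main obstacle is precisely the ``$\supseteq$'' half of the claim — the point that a member of $M$ which is correctly seen inside $M$ to be countable is in fact a subset of $M$. This is a standard lemma on elementary submodels, but it is where the argument really sits, and it is also the spot that forces a small bookkeeping choice: one should read $[A]^{\le\omega}$ as $([A]^{\le\omega})^N$ (equivalently, stay inside the setting, used throughout the paper, in which the invariant, and in particular $A$, is definable over $N$), so that ``$a$ is countable'' is something $N$ asserts and can hand down to $M$ by elementarity. Everything else is routine: that $([A]^{\le\omega},B,E)$ is again an invariant in the cases of interest, and that the elementary, strategic and starred versions of the principle transform in lockstep, all follow immediately once the two games have been identified.
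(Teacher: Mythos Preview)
Your argument is correct and follows essentially the same line as the paper's proof: both reduce the equivalence to comparing the winning conditions $A\cap M\,E\,b$ and $[A]^{\le\omega}\cap M\,E\,b$ for countable $M\prec N$, and both hinge on the standard fact that a countable set lying in $M$ is a subset of $M$. The paper records only the implication $A\cap M\,E\,b\Rightarrow[A]^{\le\omega}\cap M\,E\,b$ (the converse via singletons being taken as obvious), whereas you package the two directions into the single set identity $A\cap M=\bigcup\bigl([A]^{\le\omega}\cap M\bigr)$ and prove both inclusions; you are also more explicit than the paper about reading $[A]^{\le\omega}$ as $([A]^{\le\omega})^N$ so that elementarity transfers countability down to $M$. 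These are refinements of presentation rather than a different approach.
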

	\begin{proof}
		It is sufficient to show that for any countable $M\prec N$, and $b \in B$, the relation
		$$ A \cap M \, E\, b$$
		implies
		$$([A]^\omega \cap M) \; \sigma E \; b.$$
		To see this, pick any $a \in [A]^\omega \cap M$. Since $a$ is countable, $a \subseteq M$, and so $a \, E\, b$. \qedhere
	\end{proof}
	
	\subsection{$\gameABE$ on $\Hcontinuum$}
	
	In the simplest case, both strategic and elementary diamonds are reformulations of $\phiABE$. In this section we write $\Hcontinuum$ for $(\Hcontinuum; \in)$.
	
	\begin{thm} \label{mainthm}
		The following conditions are equivalent for any invariant $\ABE$:
		\begin{enumerate}
			\item $\diamondstrABEcontinuum$,
			\item $\diamondelABEcontinuum$,
			\item $\diamondelABEcontinuum$, restricted to the case where 
			$\ast_\alpha=X \cap \alpha,$ for some $X\subseteq \omega_1$,
			\item $\phicABE$.
		\end{enumerate}
	\end{thm}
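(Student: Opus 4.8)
The plan is to prove the cycle of implications $(1)\Rightarrow(2)\Rightarrow(3)\Rightarrow(4)\Rightarrow(1)$, using the fact that when $N=\Hcontinuum$ the relevant sequence $\mathcal{H}$ can be absorbed into a single model $M_1$, as remarked right after the definition of the game. The implications $(1)\Rightarrow(2)$ and $(2)\Rightarrow(3)$ are essentially formal: a winning strategy for $\good$ in particular produces, against the specific $\bad$-play that enumerates a given continuous sequence $\{\ast_\alpha\}$, a chain of models witnessing $\diamondelABEcontinuum$; and restricting the $\ast_\alpha$'s to be of the form $X\cap\alpha$ is a special case. So the two substantive directions are $(3)\Rightarrow(4)$ and $(4)\Rightarrow(1)$.

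For $(3)\Rightarrow(4)$: given a Borel-type function $F:2^{<\omega_1}\rightarrow[A]^{\le\omega}$ (or, using the preceding Proposition, we work with the invariant $([A]^{\le\omega},B,E)$ directly), I would code $F$ together with the well-order $\prec^*$ into the parameter sequence $\mathcal{H}$, so that any elementary $M_{\alpha+1}\prec\Hcontinuum$ containing the initial data knows $F\restriction 2^\alpha$. The sequence $\mathcal{B}=\{b_\alpha\}$ delivered by $\diamondsuit_{\operatorname{EL}}$ will be the candidate guessing sequence $g(\alpha)=b_\alpha$. To verify stationarity of $\{\alpha\mid F(f\restriction\alpha)\,E\,g(\alpha)\}$ for a fixed $f\in 2^{\omega_1}$, feed the continuous sequence $\ast_\alpha = f\restriction\alpha$ (which is of the form $X\cap\alpha$ for $X=f$, so case (3) applies) into the elementary diamond; the resulting chain $\{M_\alpha\}$ has $f\restriction\alpha\in M_{\alpha+1}$, hence $F(f\restriction\alpha)\in M_{\alpha+1}$, and on a club of $\alpha$ the ordinal $\delta=M_{\alpha+1}\cap\omega_1$ satisfies $f\restriction\alpha = f\restriction\delta$ with $F(f\restriction\delta)\in[A]^{\le\omega}\cap M_{\alpha+1}$, so the stationary set from $\diamondsuit_{\operatorname{EL}}$ restricts to a stationary set of $\alpha$'s with $F(f\restriction\alpha)\,E\,b_\alpha$. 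The point to be careful about is matching the indexing $\alpha$ versus the ordinal height $\delta=M_{\alpha+1}\cap\omega_1$; a standard reflection/club argument handles this.

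For $(4)\Rightarrow(1)$: assume $\phicABE$ and let $\mathcal{H}=\{h_\alpha\mid\alpha<\omega_1\}$ be given. Since $N=\Hcontinuum$, put all of $\mathcal{H}$, the well-orders $\prec^*$ and $\prec$, and the bijection $\Sigma$ into $M_1$, and have $\good$ play by the canonical Skolem-hull strategy: at step $\alpha+1$, let $M_{\alpha+1}$ be the $\prec^*$-least countable elementary submodel containing $\{\ast_\alpha, h_\alpha,\{M_\beta\mid\beta<\alpha\}\}\cup\mathcal{H}$ (taking unions at limits). This makes the whole play a deterministic continuous function of the sequence $\vec{\ast}=\{\ast_\alpha\}$ that $\bad$ builds, and the map $\vec{\ast}\mapsto (M_{\alpha+1}\cap\omega_1)_{\alpha}$ is continuous in the natural sense, so the assignment $\beta\mapsto A\cap M_{\alpha+1}$ for $\beta=M_{\alpha+1}\cap\omega_1$ defines a Borel function $\widehat F:2^{<\omega_1}\rightarrow[A]^{\le\omega}$ once we encode $\ast_\alpha$ via $\Sigma$ as an element of $2^\omega$, i.e. $\widehat F(s)$ depends only on the finite/countable data coded by $s$. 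Apply $\phicABE$ to $\widehat F$ to get $g:\omega_1\rightarrow B$ and set $b_\alpha = g(M_{\alpha+1}\cap\omega_1)$ — except that $b_\alpha$ must be chosen in advance, not depending on the play; this is resolved because $\good$'s strategy is fixed first, so the sequence of heights $(M_{\alpha+1}\cap\omega_1)_\alpha$ ranges over a club determined by the strategy, and on the relevant coordinates we can simply set $\mathcal{B}$ by $b_\alpha=g(\gamma_\alpha)$ where $\gamma_\alpha$ is the height forced by the strategy at stage $\alpha$; against any $\bad$-play the guessed set from $\phicABE$ witnesses that $\good$ wins.

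The main obstacle is the last point in $(4)\Rightarrow(1)$: reconciling the order of quantifiers. In $\diamondstrABEcontinuum$ the sequence $\mathcal{B}$ must be chosen \emph{before} the game, whereas $\phicABE$ naturally produces a guess $g$ indexed by countable ordinals, and $\good$'s models $M_{\alpha+1}$ only reveal their height during play. The resolution is exactly the remark in the paper that over $\Hcontinuum$ one may load all of $\mathcal{H}$ into $M_1$ and let $\good$ use a canonical (definable-from-$\prec^*$) strategy, which decouples the height sequence from $\bad$'s choices up to a club; one should spell out that the canonical strategy makes $M_{\alpha+1}\cap\omega_1$ depend only on $\sup_{\beta\le\alpha}(M_{\beta+1}\cap\omega_1)$ plus bounded data, so that $\mathcal{B}$ can be defined uniformly and $\widehat F$ is genuinely Borel on each level $2^\alpha$. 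I expect the bookkeeping around encoding $\ast_\alpha\in\Hcountable$ as reals via $\Sigma$ and checking Borelness of $\widehat F\restriction 2^\alpha$ to be the most delicate routine part, but no new idea beyond the standard Skolem-hull construction is needed.
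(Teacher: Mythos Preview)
Your $(3)\Rightarrow(4)$ is essentially correct but over-engineered: once $f\restriction\alpha$ and $F\restriction 2^\alpha$ lie in $M_{\alpha+1}$, you have $F(f\restriction\alpha)\in[A]^{\le\omega}\cap M_{\alpha+1}$; since this is a countable set it is \emph{contained} in $A\cap M_{\alpha+1}$, and the relation $A\cap M_{\alpha+1}\,E\,b_\alpha$ immediately yields $F(f\restriction\alpha)\,E\,b_\alpha$. No passage to $\delta=M_{\alpha+1}\cap\omega_1$ is needed, and indeed that detour is not even sound as stated (you always have $\delta>\alpha$, so $f\restriction\alpha\neq f\restriction\delta$).

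The real problem is in $(4)\Rightarrow(1)$. Your proposed resolution of the quantifier mismatch does not work: the height $\gamma_\alpha=M_{\alpha+1}\cap\omega_1$ is \emph{not} ``forced by the strategy''---it genuinely depends on $\bad$'s move $\ast_\alpha$, because the $\prec^*$-least Skolem hull containing $\ast_\alpha$ varies with $\ast_\alpha$. Hence $b_\alpha=g(\gamma_\alpha)$ cannot be fixed before the game, and the claim that the heights are ``decoupled from $\bad$'s choices up to a club'' is false. The paper avoids heights entirely. One defines, for every $s\in 2^{<\omega_1}$, a model $M(s)$ (the canonical hull containing $s$, $\mathcal{H}$, and $\{M(s'):s'\sqsubset s\}$), sets $F(s)=A\cap M(s)$, and takes $\mathcal{B}$ to be the guessing sequence $g$ for this $F$, with $b_\alpha=g(\alpha)$ indexed by $\alpha$ itself. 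During the game $\good$ encodes $\bad$'s moves via $\Sigma$ into an increasing $f_\alpha\in 2^{<\omega_1}$ and plays $M_{\alpha+1}=M(f_\alpha)$; the bookkeeping is arranged so that $\operatorname{dom}(f_\alpha)=\omega\cdot\alpha$, whence $f_\alpha\in 2^\alpha$ on the club of $\alpha$ with $\omega\cdot\alpha=\alpha$. On that club $M_{\alpha+1}=M(f\restriction\alpha)$ and $A\cap M_{\alpha+1}=F(f\restriction\alpha)$, so the stationary guessing set for $f=\bigcup_\alpha f_\alpha$ does the job. The key point you are missing is that the alignment is achieved by controlling the \emph{length of the encoding}, not the ordinal height of the models.

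A minor remark: $\phicABE$ places no definability restriction on $F$, so your worries about verifying Borelness of $\widehat F\restriction 2^\alpha$ are beside the point here (they become relevant only in the $\Hkappa$ version, Theorem~\ref{mainthm2}).
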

	\begin{proof}

		$1.\implies 2.$ Fix a sequence $\mathcal{H}$, and let $\mathcal{B}$ be such that $\good$ has a winning strategy in $\gameABE$. Let $\langle \ast_\alpha \mid \alpha<\omega_1\rangle$ be arbitrary. Consider a run of the game $\gameABE$ on $\Hcontinuum$, where $\bad$ plays the sets $\ast_\alpha$, regardless of the plays of $\good$. Assume that $\good$ plays according to a winning strategy, and let $\langle M^*_\alpha\mid \alpha<\omega_1 \rangle$ be the resulting chain of elementary submodels. It is straightforward that $\langle M_\alpha \mid \alpha< \omega_1\rangle$ witnesses $\diamondelABEcontinuum$, where $M_\alpha := M^*_{\alpha+1}$.\\
		
		$2. \implies 3.$ Clear.\\
		
		$3. \implies 4.$ Let $F:2^{<\omega_1}\rightarrow A$ be any function. Let $\mathcal{B}$ be a sequence obtained from $3.$ for $\mathcal H=\langle F\rangle$. Finally, let $f \in 2^{\omega_1}$ be given. Using $3.$ we can find a stationary set of indices $\beta$, together with $M_\beta \prec \Hcontinuum$, satisfying:
		\begin{itemize}
			\item $f\restriction \alpha,\, F\in M_\beta$,
			\item $(A \cap M_\beta) \; \sigma E \; b_\beta.$
		\end{itemize}
		But the latter bullet ensures, in particular, that $F(f\restriction \alpha) \; E \; b_\alpha$. This shows that $\mathcal{B}$ witnesses $\phiABE$.\\
		
		$4. \implies 1.$ 
		Let $\mathcal{H}$ be a given sequence of parameters. We define the suitable function $F:2^{<\omega_1}\rightarrow [A]^\omega$ by the induction: suppose that $F\restriction 2^\beta$ is defined for all $\beta<\alpha$. For $s \in 2^{<\omega_1}$ let us define (recursively) $M(s)$ as a countable elementary submodel of $\Hcontinuum$, that contains $s$, $\mathcal{H}$, and the set $$\{M(s')|\; s' \sqsubset s\}.$$
		Given $s \in 2^\alpha$, we put $F(s)=A\cap M(s)$.
		
		Let $\langle b_\alpha \mid \alpha<\omega_1\rangle$ be a guessing sequence for $F$. We describe the winning strategy for $\good.$ The strategy will consist of building a chain of models $M_\alpha$ together with functions $f_{\alpha} \in 2^{<\omega_1}$, coding the sets $\ast_\alpha$. Suppose we want to define $M_{\alpha+1}$, and $f_\alpha$.
		\begin{itemize}
			\item If $\alpha=0$, we put $f_\alpha=\emptyset$.
			\item If $\alpha$ is successor, then $\bad$ just played $\ast_{\alpha}$, where $\alpha=\beta+2$. We set $f_{\alpha}=f_\beta^\frown t_{\beta+2}$, where $t_{\beta+2} \in 2^\omega$, and $\ast_{\alpha}=\Sigma(t_{\beta+2})$.
			\item If $\alpha$ is limit, we put $f_\alpha=\displaystyle{\bigcup_{\beta<\alpha}f_\beta}$. Since $\ast_\alpha$ is definable from $\langle \ast_\beta|\; \beta<\alpha\rangle$, it is also definable from $f_\alpha$. 
		\end{itemize}
		In both cases, we put $M_{\alpha+1}=M(f_\alpha)$. Notice that for all $\alpha< \omega_1$, $f_{\alpha \cdot 2} \in 2^{\omega\cdot \alpha}$, and for $\alpha$ limit, $f_\alpha \in 2^{\omega \cdot \alpha}$. Why is this a correct strategy?
		\begin{itemize}
			\item $h_\alpha \in M_{\alpha+1}=M(f_\alpha)$, since both $\alpha$ and $\mathcal{H}$ are elements of $M(f_\alpha)$.
			\item $\ast_\alpha \in M_{\alpha+1}$ since, as we noticed, $\ast_{\alpha}$ is definable from $f_\alpha$.
			\item $\{M_\beta|\; \beta<\alpha, \text{ $\beta$ is odd}\} \in M(f_\alpha)$, because $\{M_\beta|\; \beta<\alpha, \text{ $\beta$ is odd}\}$ is definable from $\{M(s)|\; s \sqsubset f_\alpha\}\in M(f_\alpha).$
		\end{itemize}
		Finally, why is this a winning strategy? Let $f = \displaystyle{\bigcup_{\alpha<\omega_1}f_\alpha}$. For club many $\alpha<\omega_1$, it is the case that $f_\alpha \in 2^\alpha$, so $M_{\alpha+1}=M(f_\alpha)=M(f\restriction \alpha)$. For stationarily many of these $\alpha$ it is the case that $F(f\restriction \alpha) \, E\, b_\alpha$, but $F(f\restriction \alpha)=A\cap M(f\restriction \alpha)=A \cap M_{\alpha+1}$. \qedhere
	\end{proof}
	\subsection{$\gameABE$ on $\Hkappa$}
	
	In the case when $N=\Hkappa$ for arbitrary regular $\kappa$, we were not able to reproduce the previous result. We can still prove that the principles $\diamondelABEN$ and $\diamondABE$ are closely related, while the apparent mismatch comes from different restrictions on the function $F$.

	\begin{thm} \label{mainthm2}
		Let $\ABE$ be an invariant definable (with parameters) over $(\Hkappa;\LL)$, where $\LL$ is a countable language extending $\{\in, \prec^*\}$. Then each of the following conditions implies the next:
		\begin{enumerate}
			\item $\phicABE$, restricted to functions $F$, such that every restriction $F\restriction 2^\alpha$ is definable over the structure $(\Hkappa;\LL,\theta)$, where $\theta=\theta[(\Hkappa;\LL)]$ is a truth predicate,
			\item $\diamondstrABEkappaLL$,
			\item $\diamondelABEkappaLL$, restricted to the case where $\ast_\alpha=X \cap \alpha,$ for some $X\subseteq \omega_1$,
			\item $\diamondelABEkappaLL$,
			\item $\phicABE$, restricted to functions $F$, such that every restriction $F\restriction 2^\alpha$ is definable over the structure $(\Hkappa;\LL)$.
		\end{enumerate}
	\end{thm}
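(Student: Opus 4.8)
The plan is to adapt the four implications from the proof of Theorem \ref{mainthm}, keeping careful track of definability. The cycle $1.\implies 2.\implies 3.\implies 4.\implies 5.$ will be established, with the two "endpoints" not meeting precisely because the function $F$ produced in step $4.\implies 5.$ is only definable over $(\Hkappa;\in,\prec^*)$, whereas to run $1.\implies 2.$ one must already be able to \emph{evaluate} such $F$ inside the models $M_{\alpha+1}$, which forces the stronger ambient structure $(\Hkappa;\in,\prec^*,\theta)$.

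First, for $1.\implies 2.$: given $\mathcal{H}$, one mimics the construction $4.\implies 1.$ from Theorem \ref{mainthm}, but now the countable elementary submodels $M(s)\prec(\Hkappa;\in,\prec^*)$ must be chosen \emph{definably}, so that the resulting $F(s)=A\cap M(s)$ has each restriction $F\restriction 2^\alpha$ definable over $(\Hkappa;\in,\prec^*,\theta)$. This is where $\theta$ is needed: using the formula $\phi(m)=$"$m$ is a countable elementary submodel of $(\Hkappa;\in,\prec^*)$" (available in the language with $\theta$) together with the well-order $\prec^*$, one can let $M(s)$ be the $\prec^*$-least such submodel containing $s$, $\mathcal{H}$, and $\{M(s')\mid s'\sqsubset s\}$; by recursion on the length of $s$ the whole map $s\mapsto M(s)$ is definable over $(\Hkappa;\in,\prec^*,\theta)$, hence so is $F\restriction 2^\alpha$. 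Apply the restricted $\phicABE$ of clause $1.$ to get a guessing sequence $\mathcal{B}=\{b_\alpha\}$, and then the strategy for $\good$ is exactly as in Theorem \ref{mainthm}: decode $\bad$'s moves $\ast_\alpha$ into a branch $f_\alpha\in 2^{<\omega_1}$ via $\Sigma$, and play $M_{\alpha+1}=M(f_\alpha)$. The verification that $h_\alpha,\ast_\alpha,\{M_\beta\mid\beta<\alpha\text{ odd}\}\in M_{\alpha+1}$ and that the winning set is stationary is identical to the corresponding part of Theorem \ref{mainthm}, using that $M(s)$, being elementary in $(\Hkappa;\in,\prec^*)$, is closed under the relevant definable operations.

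The implications $2.\implies 3.$ and $3.\implies 4.$ go as before: $2.\implies 3.$ is the trivial restriction, and for $3.\implies 4.$ one observes (as promised in the text after the definition of elementary diamond) that an arbitrary continuous sequence $\{\ast_\alpha\}\subseteq\Hcountable$ can be coded by a single $X\subseteq\omega_1$ — using $\Sigma$ to fold the $\ast_\alpha$'s into one subset of $\omega_1$ — so that playing against $\ast_\alpha=X\cap\alpha$ yields, via decoding, the chain of models required for the original sequence; here the ambient structure only needs to decode $X$, which an elementary submodel of $(\Hkappa;\in)$ containing the relevant parameter already does. For $4.\implies 5.$, given a function $F:2^{<\omega_1}\to[A]^{\le\omega}$ that is definable over $(\Hkappa;\in,\prec^*)$ with each $F\restriction 2^\alpha$ so definable, set $h_\alpha=F\restriction 2^\alpha$ (these are genuine elements of $\Hkappa$), obtain $\mathcal{B}$ from $\diamondelABEkappa$, fix $f\in 2^{\omega_1}$, feed in $\ast_\alpha=f\restriction\alpha$, and extract the chain $\{M_\alpha\}$; on the stationary set where $A\cap M_{\alpha+1}\,E\,b_\alpha$ one gets $F(f\restriction\alpha)\,E\,b_\alpha$ by the Proposition on $[A]^{\le\omega}$, since $F(f\restriction\alpha)$ is a countable subset of $A$ and, being computed by a definable formula from parameters in $M_{\alpha+1}\prec(\Hkappa;\in,\prec^*)$, lies inside $M_{\alpha+1}$, hence is contained in it.

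The main obstacle — and the reason clauses $1.$ and $5.$ are stated with different ambient structures rather than as a clean equivalence — is precisely this asymmetry in definability. In $1.\implies 2.$ the models $M(s)$ must be selected by a formula, and "being a countable elementary submodel" is not first-order expressible in $(\Hkappa;\in,\prec^*)$ alone, so the truth predicate $\theta$ is genuinely used to build $F$; conversely the $F$ manufactured in $4.\implies 5.$ (namely $s\mapsto A\cap M(s)$ for an arbitrary chain of models witnessing the elementary diamond) is only guaranteed to be $(\Hkappa;\in,\prec^*)$-definable. Closing the loop would require showing that every $(\Hkappa;\in,\prec^*,\theta)$-definable $F$ can be "absorbed" by an $(\Hkappa;\in,\prec^*)$-definable one, which is exactly the point where the argument of Theorem \ref{mainthm} — valid because over $\Hcontinuum$ the truth predicate is internally available — fails to transfer, and we leave it as the gap between the two endpoints.
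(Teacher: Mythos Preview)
Your overall plan matches the paper's proof, but two of your steps assume that certain objects lie in $\Hkappa$ when they need not.

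In $1.\implies 2.$ you let $M(s)$ be the $\prec^*$-least countable elementary submodel containing $s$, $\mathcal{H}$, and $\{M(s')\mid s'\sqsubset s\}$. But $\mathcal{H}$ is an $\omega_1$-sequence of elements of $\Hkappa$ and need not itself belong to $\Hkappa$ (consider $\kappa=\omega_1$); indeed the paper remarks explicitly that ``in general $\mathcal{H}$ doesn't need to be an element of $N$, so the models $M_\alpha$ must unveil it gradually.'' The paper therefore requires $M(s)$ to contain only the countable initial segment $\{h_\beta\mid\beta<|s|\}$, which \emph{is} in $\Hkappa$, and then verifies $h_\alpha\in M(f_\alpha)$ using $|\operatorname{dom}(f_\alpha)|\ge\alpha$.

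The same issue recurs more sharply in $4.\implies 5.$, where you set $h_\alpha=F\restriction 2^\alpha$ and assert parenthetically that ``these are genuine elements of $\Hkappa$.'' For $\alpha\ge\omega$ the domain $2^\alpha$ has size $\mathfrak{c}$, so the transitive closure of $F\restriction 2^\alpha$ has size at least $\mathfrak{c}$, and $F\restriction 2^\alpha\notin\Hkappa$ whenever $\kappa\le\mathfrak{c}$ --- in particular for $\kappa=\omega_1$. The paper's remedy is to take $h_\alpha$ to be the (finite) set of \emph{parameters} appearing in the $(\Hkappa;\in,\prec^*)$-definition of $F\restriction 2^\alpha$. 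Then $M_{\alpha+1}\prec(\Hkappa;\in,\prec^*)$ contains both $f\restriction\alpha$ and these parameters, so by elementarity $F(f\restriction\alpha)\in M_{\alpha+1}$; this is exactly the mechanism your last sentence appeals to, but your choice of $h_\alpha$ does not make it available. With these two adjustments your argument coincides with the paper's.
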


	\begin{proof}[Proof of Theorem \ref{mainthm2}] 
		$1. \implies 2.$ Let $\mathcal{H}=\langle h_\alpha\mid \alpha<\omega_1\rangle$ be a given sequence of parameters. We define the suitable function $F:2^{<\omega_1}\rightarrow A$ by the induction: suppose that $F\restriction 2^\beta$ is defined for all $\beta<\alpha$. For $s \in 2^{<\omega_1}$ let us define (recursively) $M(s)$ as the $\prec^*$-least countable elementary submodel of $(\Hkappa; \LL)$, that contains $s$, $\langle h_\beta|\;\beta<|s|\rangle$, and the set $$\{M(s')|\; s' \sqsubset s\}.$$
		Given $s \in 2^\alpha$, we put $F(s)=A \cap M(s)$.
		
		Notice that the family $\{M(s) \mid s \in 2^{<{\omega_1}}\}$ is definable over $(\Hkappa;\LL,\theta)$. Let $\langle b_\alpha \mid \alpha<\omega_1\rangle$ be a guessing sequence for $F$. The winning strategy for $\good$ is defined exactly as in Theorem \ref{mainthm}, but this time we make use of definability of the coding function $\Sigma$. A straightforward verification shows that every function of the form $F\restriction 2^\alpha$ is definable over $(\Hkappa; \LL,\theta)$.\\
		
		$2. \implies 3.$ Straightforward, like $1. \implies 2.$ in Theorem \ref{mainthm}.\\
		
		$3. \implies 4.$ Since $\langle \ast_\alpha \mid \alpha < \omega_1 \rangle$ is continuous, we can find $X \subseteq \omega_1$ such that $\ast_\alpha$ is definable in $(\Hcountable; \in)$ from $X\cap \alpha$, for club many $\alpha<\omega_1$.\\
		
		$4. \implies 5.$ Similar to $3.\implies 4.$ in Theorem \ref{mainthm}. The only difference is that $h_\alpha$ is a set of parameters, such that $F\restriction 2^\alpha$ is definable inside $(\Hkappa;\LL)$ from $h_\alpha$. \qedhere \end{proof}
	
	The most important case is $\kappa=\omega_1$. In this case we can apply Theorem \ref{mainthm2} to obtain an equivalent definition of $\phiABE$ for a class of functions that can be coded by real numbers, but in a much broader sense than Borel functions.
	
	\begin{thm}\label{mainthm3}
		Let $\ABE$ be an invariant definable over $(\Hcountable;\in,\prec^*)$. Then the following conditions are equivalent:
		\begin{enumerate}
			\item $\phicABE$, restricted to those functions $F$ for which there exists $n<\omega$ such that every restriction $F\restriction 2^\alpha$ is definable over the structure $(\Hcountable;\in,\prec^*,\theta_0,\ldots,\theta_n)$,
			\item $\diamondstrABEcountable$,
			\item $\diamondelABEcountable$, restricted to the case where $\ast_\alpha=X \cap \alpha,$ for some $X\subseteq \omega_1$,
			\item $\diamondelABEcountable$,
		\end{enumerate}
	\end{thm}
	\begin{proof}
		Follows from Theorem \ref{mainthm2}, applied to $\kappa =\omega_1$, and $\LL=\{\in,\prec^*,\theta_0,\ldots,\theta_n\}$, for varying $n<\omega$. \qedhere
	\end{proof}
	
	\subsection{Tukey connections}
	
	We can compare invariants with respect to \emph{Tukey connections}.
	\begin{defin}[\cite{vojtas}]
		If $\ABEq$, $\ABEqq$ are invariants, a Tukey connection from $\ABEq$ to $\ABEqq$ is a pair of mappings $\phi:A_1\rightarrow A_2$, and $\psi:B_2\rightarrow B_1$ that satisfy
		$$\phi(a)\, E_2\, b \implies a \, E_1\, \psi(b),$$
		for all $a \in A_1$, $b \in B_2$. 
	\end{defin}
	In the spirit of Propositions 2.8, and 4.9 in \cite{mhd}, we obtain:
	\begin{thm}\label{tukey}
		Let $\ABEq$, $\ABEqq$ be invariants, and let $(\phi,\psi)$ be a Tukey connection from $\ABEq$ to $\ABEqq$, where $\phi$ is definable over $N$. Then the following implications hold
		$$\diamondsuit_{\operatorname{STR}}(\ABEqq,N) 
		\implies \diamondsuit_{\operatorname{STR}}(\ABEq,N),$$
		$$\diamondsuit_{\operatorname{EL}}(\ABEqq,N) 
		\implies \diamondsuit_{\operatorname{EL}}(\ABEq,N).$$
	\end{thm}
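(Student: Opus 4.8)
The plan is to transfer a winning strategy (resp. a witnessing chain of models) for the target invariant $\ABEqq$ to the source invariant $\ABEq$ by composing with the Tukey maps. Fix a sequence of parameters $\mathcal{H} = \{h_\alpha \mid \alpha < \omega_1\}$ for which we must produce a witness on the $\ABEq$-side. The key observation is that the games $\mathcal{G}_{\ABEq}(\mathcal{H},\mathcal{B})$ and $\mathcal{G}_{\ABEqq}(\mathcal{H},\mathcal{B}')$ have \emph{identical} move structure: the plays of $\bad$ and $\good$ (the sets $\ast_\alpha$ and the models $M_{\alpha+1}$), as well as the requirements "$\{\ast_\alpha, h_\alpha, \{M_\beta \mid \beta < \alpha,\ \beta\text{ odd}\}\} \in M_{\alpha+1}$", depend only on $\mathcal{H}$ and $N$, not on the invariant. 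Only the winning condition differs. So the whole task reduces to relating the two winning conditions via $(\phi,\psi)$.

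First I would apply $\diamondsuit_{\operatorname{STR}}(\ABEqq, N)$ to the \emph{same} sequence $\mathcal{H}$, obtaining a sequence $\mathcal{B}' = \{b'_\alpha \mid \alpha < \omega_1\} \subseteq B_2$ and a winning strategy $\sigma$ for $\good$ in $\mathcal{G}_{\ABEqq}(\mathcal{H},\mathcal{B}')$. I then set $b_\alpha = \psi(b'_\alpha) \in B_1$ and claim $\sigma$ is already a winning strategy for $\good$ in $\mathcal{G}_{\ABEq}(\mathcal{H},\mathcal{B})$: legality of moves is unchanged, and in any run the set $\{\alpha \mid A_2 \cap M_{\alpha+1}\, E_2\, b'_\alpha\}$ is stationary. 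The point is to show $A_2 \cap M_{\alpha+1}\, E_2\, b'_\alpha \implies A_1 \cap M_{\alpha+1}\, E_1\, b_\alpha$. Here I use that $\phi$ is definable over $N$: since $M_{\alpha+1} \prec N$, the set $M_{\alpha+1}$ is closed under $\phi$, so $\phi[A_1 \cap M_{\alpha+1}] \subseteq A_2 \cap M_{\alpha+1}$. Hence for every $a \in A_1 \cap M_{\alpha+1}$ we have $\phi(a)\, E_2\, b'_\alpha$, and the Tukey inequality $\phi(a)\, E_2\, b'_\alpha \implies a\, E_1\, \psi(b'_\alpha) = a\, E_1\, b_\alpha$ gives $A_1 \cap M_{\alpha+1}\, E_1\, b_\alpha$, as desired. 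Thus the two winning sets are related by inclusion (the $\ABEqq$-winning set is contained in the $\ABEq$-winning set), so stationarity of the former yields stationarity of the latter.

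The elementary-diamond implication is proved by exactly the same substitution: given $\mathcal{H}$, apply $\diamondsuit_{\operatorname{EL}}(\ABEqq,N)$ to get $\mathcal{B}'$, put $b_\alpha = \psi(b'_\alpha)$, and for any continuous sequence $\{\ast_\alpha \mid \alpha < \omega_1\}$ use the chain $\{M_\alpha \mid \alpha < \omega_1\}$ handed to us by $\diamondsuit_{\operatorname{EL}}(\ABEqq,N)$ — the membership requirements are invariant-free, and the stationary set of good $\alpha$ transfers by the inclusion just established.

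The only genuine point requiring care — and the "main obstacle", though it is mild — is the use of definability of $\phi$ over $N$ to guarantee $\phi[A_1 \cap M] \subseteq A_2 \cap M$ for countable $M \prec N$. One needs $A_1, A_2, E_1, E_2$ (equivalently, the invariants being "definable over $N$" in the sense used implicitly in Theorem \ref{mainthm2}) and a definition of $\phi$ all to be expressible in $N$, so that elementarity of $M$ applies; if $N = \Hcontinuum$ this is automatic, and in general it is exactly the hypothesis stated. No other step involves more than unwinding definitions, so I would keep the write-up short, doing the strategic case in full and remarking that the elementary case is "the same substitution".
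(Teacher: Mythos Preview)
Your proof is correct and follows essentially the same approach as the paper: apply the $\ABEqq$-diamond to the same $\mathcal{H}$, push the guessing sequence through $\psi$, and use $\phi[A_1\cap M]\subseteq A_2\cap M$ (from elementarity and definability of $\phi$) together with the Tukey inequality to show the winning set for $\ABEqq$ is contained in that for $\ABEq$. The one place the paper is slightly more explicit is the parameter issue: if the definition of $\phi$ over $N$ uses parameters, an arbitrary $M_{\alpha+1}\prec N$ need not contain them, so the paper tweaks the strategy by pretending $\bad$ played those parameters at the first move---you flag this as the point requiring care but stop short of saying how to arrange it; adding one sentence to that effect would make the argument airtight.
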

	\begin{proof}
		We give a proof for the first implication, for the other one can be proved essentially the same way. Fix $\mathcal{H} \subseteq \Hcountable$, and let $\mathcal{B}=\langle b_\alpha|\; \alpha<\omega_1\rangle \subseteq B_2$  be such that $\good$ has a winning strategy in $\mathcal{G}_{\ABEqq}(\mathcal{H},\mathcal{B})$. Let $\mathcal{B}^*=\langle \psi(b_\alpha)|\; \alpha<\omega_1\rangle $. We will check that essentially the very same strategy is winning in $\mathcal{G}_{\ABEq}(\mathcal{H},\mathcal{B}^*)$. \emph{Essentially} means that we want to play models that are closed under $\phi$, and for this reason they must contain all possible parameters from the definition of $\phi$. This can be obviously ensured, for example by pretending that $\bad$ played these parameters in his first move. Now, since $\phi$ is definable, we have
		$$\phi[A_1\cap M_{\alpha+1}] \subseteq A_2 \cap M_{\alpha+1}$$
		at every step of the game. For this reason 
		$$(A_2 \cap M_{\alpha+1}) \; E_2 \; b_\alpha \implies \phi[A_1 \cap M_{\alpha+1}]\; E_2\; b_\alpha \implies (A_1 \cap M_{\alpha+1}) \; E_1 \; \psi(b_\alpha).$$
		This shows that the strategy is winning in $\mathcal{G}_{\ABEq}(\mathcal{H},\mathcal{B}^*)$, and concludes the proof. \qedhere
	\end{proof}

	\section{Forcing $\diamondsuit_{\operatorname{EL}}\ABE$, and $\diamondsuit_{\operatorname{STR}}\ABE$}
	
	The axioms of the form $\diamondABE$ or $\diamondelABEN$ can be forced either with finite or countable conditions. Forcing $\diamondABE$ with finite supports was systematically studied by H. Minami in \cite{minami}, and \cite{cichon}. On the other hand, the authors of \cite{mhd} give a general method for constructing models of $\diamondABE$ using countable supports. In both cases the resulting models will actually satisfy $\diamondABE$ for much bigger class of functions $F$, than merely Borel. This was relevant in \cite{idealindependent}, where the authors apply $\diamondABE$ for $F$ from $L(\mathbb{R})$. We give a sample of different arguments, to give the reader the flavour of ideas that can be applied in our setting.
	
	\subsection{By ccc forcing}
	By Theorem III.2 from \cite{hrusak}, forcing with a measure algebra over a model of $\diamondsuit$ preserves certain weaker form of $\diamondsuit(\mathfrak{d})$. Using the developed machinery we can prove a somewhat stronger result.
	\begin{thm} \label{thm6}
		For any $\kappa\ge \omega$, forcing with $\mathbb{B}_\kappa$ over a model of $\diamondsuit_{\operatorname{EL}}(\mathfrak{d}, (\Hcountable; \in))$ gives a model of $\diamondsuit_{\operatorname{EL}}(\mathfrak{d}, (\Hcountable; \in))$. In particular, it gives a model of $\diamondsuit(\mathfrak d)$.
	\end{thm}
	\begin{proof}
		
		Assume that the ground model satisfies $\diamondsuit_{\operatorname{EL}}(\mathfrak{d}, (\Hcountable; \in))$. If $\mathbb{B}_\kappa$ adds a counterexample to $\diamondsuit_{\operatorname{EL}}(\mathfrak{d}, (\Hcountable; \in))$, then so does its fragment of the form $\mathbb{B}_S$, for $|S|\le \omega_1$. Therefore we can assume, without loss of generality, that $\kappa=\omega_1$. The case $\kappa=\omega$ is similar, and left to the reader.
		
		Assume $G\in 2^{\omega_1}$ is a generic filter, and let $G_\beta:= G\restriction \beta$, for $\beta<\omega_1$. For any $x \in \Hcountable^{\mathbb{V}[G]}$ there is $\beta<\omega_1$ such that $\mathbb V[G] \models \dot{x}[G]=x$, where $\dot{x}$ is a $\mathbb{B}_\beta$-name.
		
		Let $\mathcal H \subseteq \Hcountable^{\mathbb{V}[G]}$ be arbitrary. Then, let us fix a sequence of names $\mathcal H^*= \langle \dot{h}_\alpha \mid \alpha<\omega_1 \rangle$ for elements of $\mathcal H$. We can also assume that every $\dot{h}_\alpha$ is a $\mathbb B_{\beta(\alpha)}$-name, for some countable ordinal $\beta(\alpha)$.
		
		Let $\langle r_\alpha \mid \alpha<\omega_1 \rangle \subseteq \omega^\omega$ be a sequence of reals witnessing $\diamondsuit_{\operatorname{EL}}(\mathfrak{d}, (\Hcountable; \in))$ for $\mathcal H^*$.
		
		Pick an arbitrary $X \in \mathcal P(\omega_1)\cap \mathbb V[G]$, and a $\mathbb B_{\omega_1}$-name $\dot{X}$ for it. Let $\dot{C}$ be a name for a (ground model) club subset of $\omega_1$. For every $\alpha<\omega_1$, we declare $\ast_\alpha$ to be a name for $X\cap \alpha$, such that moreover $\ast_\beta$ is always definable (in $(\Hcountable; \in)$) from $\langle \ast_\alpha \mid \alpha<\beta \rangle$. 
		
		We apply $\diamondsuit_{\operatorname{EL}}(\mathfrak{d}, (\Hcountable; \in))$ for this sequence. We pick $\beta<\omega_1$ large enough so that:
		\begin{enumerate}
			\item $G_\beta$ decides the club $\dot{C}$,
			\item $\beta \in C$,
			\item $G_\beta$ decides $\dot{X}\cap \beta$,
			\item there exists a countable $M\prec (\Hcountable; \in)$ such that
			\begin{itemize}
				\item $\ast_\beta, \; \langle \dot{h}_\alpha \mid \alpha\leq \beta\rangle \in M$,
				\item $r_\beta$ is dominating over $M$.
			\end{itemize}
		\end{enumerate}
		
		The points 1.--3. are satisfied by a club of ordinals $\beta$. For some of them, 4. will hold by the virtue of $\diamondsuit_{\operatorname{EL}}(\mathfrak{d}, (\Hcountable; \in))$. The required guessing feature in $V[G]$ will be witnessed by $M[G\cap M]=M[G_{\omega_1 \cap M}]$. The fact that $G\cap M$ is generic over $M$ is a standard consequence of the ccc. Recall that $\ast_\beta$ is a name for $X\cap \beta$. It follows that $X\cap \beta \in M[G\cap M]$, and for the same reason, $\langle h_\alpha \mid \alpha \leq \beta \rangle \in M[G\cap M]$. We need to check that $(M[G \cap M]; \in) \prec (\Hcountable; \in)^{\mathbb V[G]}.$
		
		For the verification of Tarski-Vaught critierion, assume that
		
		$$\Hcountable^{\mathbb{V}[G]}\models \exists \, y \; \phi(m,y),$$
		for some $m\in M[G\cap M]$. Fix $m$ and $y$ as above. There exist $\mathbb{B}_\omega$-names $\dot{m}$ and $\dot{y}$ for which we have
		$$\Hcountable^{\mathbb{V}}\models \; \mathbb{B}_\omega \Vdash \phi(\dot{m},\dot{y}),$$
		and $\dot{m} \in M$. Since $M\prec \Hcountable^{\mathbb{V}}$, we can assume that $\dot{y} \in M$ as well, and so $y=\dot{y}[G\cap M] \in M[G\cap M]$. It follows that 
		$$\Hcountable^{\mathbb{V}[G]}\models \phi(m,y).$$
		This shows that $M\prec \Hcountable^{\mathbb{V}[G]}$.
		
		This concludes the proof: whenever a real is dominating over $M$, it is also dominating over $M[G\cap M]$, and stationary sets remain stationary in the extension by the ccc. \qedhere
		
	\end{proof}
	\begin{remark}
		A similar argument shows that forcing with the Cohen forcing $\mathbb{C}_\kappa$ over a model of $\diamondsuit_{\operatorname{EL}}(\mathfrak{d}, (\Hcountable; \in))$ preserves $\diamondsuit_{\operatorname{EL}}(\mathfrak{b}, (\Hcountable; \in))$. 
	\end{remark}
	The next theorem is basically a reformulation of an analogous result from \cite{mhd}, which in turn is a special case of Theorem 2.12 from \cite{cichon}.
	\begin{thm}
		$\mathbb{C}_{\omega_1}$, and $\mathbb{B}_{\omega_1}$ force $\diamondstarstrnonmeagre$ and $\diamondstarstrnonnull$ respectively.
	\end{thm}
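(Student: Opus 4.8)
The plan is to establish both assertions by a single argument, run once with Cohen reals and $\operatorname{non}(\mathcal{M})$ and once with random reals and $\operatorname{non}(\mathcal{N})$; I write it out for $\mathbb{C}_{\omega_1}$ and indicate the cosmetic changes for $\mathbb{B}_{\omega_1}$ at the end. Both forcings are ccc and carry a block decomposition: with $\omega_1=\bigsqcup_{\alpha<\omega_1}I_\alpha$, $I_\alpha=[\omega\cdot\alpha,\omega\cdot(\alpha+1))$, the poset $\mathbb{C}_{\omega_1}$ is (up to isomorphism) the finite-support product of the factors on the blocks, so the $\alpha$-th block yields a real $c_\alpha$ that is Cohen over $\mathbb{V}[G\restriction(\omega_1\setminus I_\alpha)]$ and hence over $\mathbb{V}[G\restriction\omega\cdot\alpha]$; moreover, since a ccc forcing adds no real outside $\mathbb{V}[G\restriction S]$ for some countable $S$, the chain $\mathbb{V}[G\restriction\omega\cdot\alpha]$ is continuous and $\Hcountable^{\mathbb{V}[G]}=\bigcup_{\alpha<\omega_1}\Hcountable^{\mathbb{V}[G\restriction\omega\cdot\alpha]}$. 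Working in $\mathbb{V}[G]$, given $\mathcal{H}=\{h_\alpha\mid\alpha<\omega_1\}$ I would fix a name $\dot{\mathcal{H}}\in\mathbb{V}$ and, by ccc, a function $\alpha\mapsto S_\alpha\in[\omega_1]^{\le\omega}$ in $\mathbb{V}$ with $\dot h_\alpha$ supported on $S_\alpha$; since the $S_\alpha$ are countable, fix an injection $\pi\colon\omega_1\to\omega_1$ in $\mathbb{V}$ with $\pi(\alpha)\ge\alpha$ and $I_{\pi(\alpha)}\cap S_\alpha=\emptyset$, and let $\mathcal{B}:=\{c_{\pi(\alpha)}\mid\alpha<\omega_1\}$ be the guessing sequence. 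The role of $\pi$ is precisely to keep $b_\alpha=c_{\pi(\alpha)}$ generic over a model that is permitted to contain $h_\alpha$.

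Next I would spell out the winning strategy for $\good$ in $\gameABE$ over $\Hcountable^{\mathbb{V}[G]}$, where $\ABE=\operatorname{non}(\mathcal{M})$ and $A\cap M\,E\,b$ means exactly ``$b$ is Cohen over $M$''. Imitating Theorem~\ref{thm6}, at step $\alpha+1$ (having seen $\ast_\alpha$) $\good$ picks a Cohen real $s_\alpha$ over $\mathbb{V}$ with $\ast_\alpha$, $h_\alpha$ and $\{M_\beta\mid\beta<\alpha\text{ odd}\}$ all in $\mathbb{V}[s_\alpha]$ --- taking $s_\alpha:=G\restriction Q_\alpha$, where $Q_\alpha:=(\omega\cdot\alpha)\cup S_\alpha$, whenever this restriction already codes those objects --- then chooses a countable $M^{\ast}_{\alpha+1}\prec\Hcountable^{\mathbb{V}}$ containing $\mathbb{C}_\omega$-names (relative to $s_\alpha$) for these three objects together with $\{M^{\ast}_\beta\mid\beta<\alpha\text{ odd}\}$, and plays $M_{\alpha+1}:=M^{\ast}_{\alpha+1}[s_\alpha]$. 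The verification that $M_{\alpha+1}\prec\Hcountable^{\mathbb{V}[G]}$ and meets the syntactic clauses of the game is the Tarski--Vaught bookkeeping from the proof of Theorem~\ref{thm6}; the essential feature of building $M_{\alpha+1}$ this way --- rather than as a $\prec^{\ast}$-least hull inside the extension --- is that its coordinate support is exactly that of $s_\alpha$.

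To see that $\good$ wins, given a run I would let $C$ be the club of limit $\delta$ closed under $\pi$, under $\beta\mapsto\sup S_\beta$, and under the functions assigning to each earlier move (of either player) the least $\gamma$ with that move lying in $\mathbb{V}[G\restriction\omega\cdot\gamma]$. For $\delta\in C$ the forced move $\ast_\delta=\{\ast_\beta\mid\beta<\delta\}$ and the set $\{M_\beta\mid\beta<\delta\text{ odd}\}$ both lie in $\mathbb{V}[G\restriction\omega\cdot\delta]$, and $h_\delta\in\mathbb{V}[G\restriction S_\delta]$, so all three are coded by $G\restriction Q_\delta$ and $\good$ does take $M_{\delta+1}$ supported inside $Q_\delta$. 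As $I_{\pi(\delta)}$ misses $S_\delta$ (choice of $\pi$) and misses $\omega\cdot\delta$ (as $\pi(\delta)\ge\delta$), the real $c_{\pi(\delta)}$ is Cohen over $\mathbb{V}[G\restriction Q_\delta]\supseteq M_{\delta+1}$, i.e.\ $A\cap M_{\delta+1}\,E\,b_\delta$; hence the success set of the run contains the club $C$, which is exactly what $\diamondsuit^{\ast}_{\operatorname{STR}}$ asks for. The case of $\mathbb{B}_{\omega_1}$ and $\diamondstarstrnonnull$ is word-for-word the same, using that $\mathbb{B}_{\omega_1}$ is the measure algebra of $\prod_\alpha 2^{I_\alpha}$, that the block generic is random over the complement, and that $A\cap M\,E\,b$ then reads ``$b$ is random over $M$''.

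The hard part, I expect, will be the Tarski--Vaught verification that each $M^{\ast}_{\alpha+1}[s_\alpha]$ is elementary in $\Hcountable^{\mathbb{V}[G]}$ while retaining coordinate support $Q_\alpha$ along $C$; this is precisely the computation already carried out in Theorem~\ref{thm6} (there with random reals; here it is transported to Cohen reals for $\operatorname{non}(\mathcal{M})$ and used verbatim for $\operatorname{non}(\mathcal{N})$), which is why I would organise the proof so as to reuse it. A less self-contained alternative would be to invoke the finite-support preservation theorems of \cite{minami,cichon} --- which yield the pertinent forms of $\diamondsuit(\operatorname{non}(\mathcal{M}))$ and $\diamondsuit(\operatorname{non}(\mathcal{N}))$ in the two extensions, in the spirit of \cite{mhd} --- and then pass to the strategic formulation via the ``club'' analogue of the implication $(1)\Rightarrow(2)$ of Theorem~\ref{mainthm2} with $\kappa=\omega_1$; the delicate point there is checking that those constructions handle all colourings $F$ definable over $(\Hcountable;\in,\prec^{\ast},\theta)$, not merely the Borel ones.
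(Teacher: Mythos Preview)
Your proposal is correct and follows the same blueprint as the paper: split the forcing into $\omega$-blocks, take the guessing sequence to be the fresh Cohen (resp.\ random) reals from suitably chosen blocks, and have $\good$ play countable models supported below the relevant block so that the fresh real is generic over them. The execution differs only in packaging. The paper has $\good$ pick $M_{\alpha+1}\prec\Hcountable^{\mathbb{V}[G_\gamma]}$ for the least workable $\gamma$ and isolates a separate chain lemma $\Hcountable^{\mathbb{V}[G_\gamma]}\prec\Hcountable^{\mathbb{V}[G]}$ to obtain elementarity in the full extension, whereas you recycle the $M^{\ast}[s]$ construction and Tarski--Vaught verification from Theorem~\ref{thm6}; these are the same computation organized differently. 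The paper also streamlines by treating $\bad$'s play as $X\cap\alpha$ for a single name $\dot X$ (with a continuous support-bounding function $\psi$), while you track the adaptive game directly via the closure club $C$ and the injection $\pi$ avoiding $S_\alpha$; this is more faithful to the game but buys no extra strength, since the paper's device of placing the guessing block at $[\phi(\alpha),\phi(\alpha)+\omega)$ just past where $h_\alpha$ is decided has the same effect.
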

	
	\begin{proof}
		We prove the theorem for $\mathbb{C}_{\omega_1}$, the other case is similar. Let $\mathcal{H}^*=\langle \dot{h}_\alpha \mid \alpha<\omega_1\rangle$ be a sequence of $\mathbb{C}_{\omega_1}$-names for parameters in $\Hcountable$. Let $G_\alpha = G \cap \mathbb{C}_\alpha$, where $G \subseteq \omega_1 $ is $\mathbb{C}_{\omega_1}$-generic. We pick an increasing function $\phi:\omega_1 \rightarrow \omega_1$ such that $G_{\phi(\alpha)}$ decides the value of $\dot{h}_\alpha[G]$. Finally, let $c_\alpha=G\cap [\phi(\alpha),\phi(\alpha)+\omega)$. We claim that with $\mathcal{B}=\langle c_\alpha \mid \alpha<\omega_1\rangle$, $\good$ has a winning strategy. 
		
		The winning strategy is simple: $\good$ chooses $M_{\alpha+1}$ so that $M_{\alpha+1}$ contains all necessary content (that is, some parameter $h_\alpha$, initial fragment $X \cap \alpha$ of some $X\subseteq \omega_1$, and the set of previous moves $\{M_\beta|\; \beta<\alpha\}$), and moreover
		$$(M_{\alpha+1}; \in) \prec (\Hcountable; \in)^{\mathbb{V}[G_\gamma]},$$
		for some $\gamma$ as small as possible.
		We will show that, on a club set, such $\gamma$ is at most $\phi(\alpha)$. Since then $c_\alpha$ is a Cohen real over $M_{\alpha+1}$, this will finish the proof (except for checking the elementarity).
		
		Let $\dot{X}$ be a $\mathbb{C}_{\omega_1}$-name for a subset of $\omega_1$, and let $\psi:\omega_1 \rightarrow \omega_1$ be a continuous function such that
		for all $\alpha<\omega_1$, $G_{\psi(\alpha)}$ decides the value of $\dot{X}\cap \alpha$. A simple inductive argument shows that $\good$ can always choose the model $M_{\alpha+1}$, so that $M_{\alpha+1} \prec \Hcountable^{\mathbb{V}[G_{\phi(\alpha)+\psi(\alpha)}]}$. But for club many $\alpha$ we have $\psi(\alpha)=\alpha$, and so $M_{\alpha+1}\prec \Hcountable^{\mathbb{V}[G_{\phi(\alpha)}]}$. This gives us $M_{\alpha+1}\prec \Hcountable^{\mathbb{V}[G]}$, by the virtue of the next Lemma. \qedhere
	\end{proof}
	
	\begin{lem}\label{elementarity}
		Assume $G\subseteq \omega_1$ is generic for either $\mathbb C_{\omega_1}$ or $\mathbb B_{\omega_1}$, and let $G_\alpha = G\cap \alpha$. Then for any infinite $\alpha<\omega_1$, $$(\Hcountable;\in)^{\mathbb{V}[G_\alpha]}\prec (\Hcountable; \in)^{\mathbb{V}[G]}.$$
	\end{lem}
	\begin{proof}
		$$\Hcountable [G] \models \quad \phi(x_0,\ldots,x_n,y),$$
		where $x_0,\ldots,x_n \in \mathbb{V}[G_\alpha]$. In $\Hcountable^{\mathbb{V}}$ we have corresponding $\mathbb{C}_\omega$-names $\dot{x}_0,\ldots,\dot{x}_n,\dot{y}$, such that for a certain $r \in 2^\omega\cap\mathbb{V}[G_\alpha]$
		$$\mathbb{V}[r]\models \quad \dot{x}_0[r]=x_0,\ldots,\dot{x}_n[r]=x_n,$$
		and
		$$\Hcountable \models \quad \mathbb{C}_\omega\Vdash \phi(\dot{x}_0,\ldots,\dot{x}_n,\dot{y}).$$
		Therefore
		$$\Hcountable[G_\alpha]\models \quad \phi(\dot{x}_0[r],\ldots,\dot{x}_n[r],\dot{y}[r]).$$
		In particular $\dot{y}[r] \in \mathbb{V}[G_\alpha]$, and this concludes the verification of elementarity. \qedhere
	\end{proof}

	\subsection{By proper forcing}
	
	We give a version of the result from \cite{mhd}.
	
	\begin{thm}[cf. Thm. 6.6, \cite{mhd}]
		
		Let $\{\mathbb{P}_\alpha\ast\dot{\mathbb{Q}}_\alpha|\; \alpha<\omega_2\}$ be a countable support iteration of Borel forcing notions such that for each $\alpha<\omega_2$,
		$$\mathcal{P}_\alpha \Vdash ``\dot{\mathbb{Q}}_\alpha\text{ is equivalent to } \mathcal{P}(2)^+\times \dot{\mathbb{Q}}_\alpha".$$
		Assume that $\mathbb{P}_{\omega_2}$ is proper, and
		$$\mathbb{P}_{\omega_2}\Vdash \langle[A]^\omega,B,E\rangle \le \omega_1$$
		for some invariant $\ABE$ definable over $(\Hcountable; \in)$.
		Then $\mathbb{P}_{\omega_2} \Vdash \diamondstrABEcountable$.
	\end{thm}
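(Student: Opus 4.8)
The plan is to mimic the countable-support construction of models of $\diamondABE$ from \cite{mhd}, adapting it to the ``guessing genericity'' framework by running a side game in the ground model, much as in the proof of Theorem \ref{thm6}. Fix a countable support iteration $\{\mathbb{P}_\alpha\ast\dot{\mathbb{Q}}_\alpha\mid \alpha<\omega_2\}$ as in the hypothesis, write $G$ for a $\mathbb{P}_{\omega_2}$-generic filter, and let $\mathcal{H}=\{h_\alpha\mid\alpha<\omega_1\}$ be a sequence of parameters in $\Hcountable^{\mathbb{V}[G]}$, together with a sequence $\mathcal{H}^*=\{\dot h_\alpha\mid\alpha<\omega_1\}$ of names. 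The first step is to set up the sequence $\mathcal{B}=\{b_\alpha\mid\alpha<\omega_1\}$: since $\mathbb{P}_{\omega_2}$ is proper, $\omega_1$ is preserved, and by $\mathbb{P}_{\omega_2}\Vdash\langle[A]^\omega,B,E\rangle\le\omega_1$ there is, in $\mathbb{V}[G]$, a set $\{b_\alpha\mid\alpha<\omega_1\}\subseteq B$ such that every element of $[A]^\omega\cap\mathbb{V}[G]$ is $E$-related to some $b_\alpha$; by Proposition (the one identifying $\diamondstrABEN$ with $\diamondsuit_{\operatorname{STR}}(([A]^{\le\omega},B,E),N)$), it suffices to guess the countable traces $A\cap M$.

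The heart of the argument is the winning strategy for $\good$. As in Theorem \ref{thm6}, $\good$ maintains a parallel ``lifted'' game: at step $\alpha$ he will have committed to an initial segment $\mathbb{P}_{\gamma_\alpha}$ of the iteration (for an increasing continuous sequence $\gamma_\alpha<\omega_2$ chosen so that $\mathbb{P}_{\gamma_\alpha}$ decides $\dot h_\beta$, $\dot\ast_\beta$ and $\dot X\cap\beta$ for all $\beta\le\alpha$, where $\dot X$ is the name coming from the continuity-coding of $\{\ast_\beta\}$ — here one uses $3.\iff 2.$ of Theorem \ref{mainthm} to reduce to $\ast_\alpha=X\cap\alpha$), and he plays $M_{\alpha+1}\prec\Hcountable^{\mathbb{V}[G\restriction\gamma_\alpha]}$ small enough that the generic object added by $\dot{\mathbb{Q}}_{\gamma_\alpha}$ (or, more precisely, the fragment of $G$ between $\gamma_\alpha$ and the next stage) is ``generic'' over $M_{\alpha+1}$ in the relevant sense. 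The role of the factor $\mathcal{P}(2)^+\times\dot{\mathbb{Q}}_\alpha$ — i.e. the requirement that $\dot{\mathbb{Q}}_\alpha$ absorb a copy of the weak-diamond forcing $\mathcal{P}(2)^+$ — is exactly as in \cite{mhd}: it is what lets $\good$ diagonalize against the names $\dot h_\alpha$ and arrange that the guessing succeeds on a stationary (in fact club, after intersecting with the club of $\alpha$ where $\gamma_\alpha$ ``catches up'') set. One checks the three bookkeeping conditions — $\{\ast_\alpha,h_\alpha,\{M_\beta\mid\beta<\alpha,\beta\text{ odd}\}\}\in M_{\alpha+1}$ — just as in Theorem \ref{thm6}: the names live in $M^*_{\alpha+1}\prec\Hcountable^{\mathbb{V}}$, and the previous models are definable from $\{M(s)\mid s\sqsubset f_\alpha\}$.

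Finally, I would verify the elementarity $M_{\alpha+1}\prec\Hcountable^{\mathbb{V}[G]}$ by the same two-step argument as in the Cohen and random cases: first $M_{\alpha+1}\prec\Hcountable^{\mathbb{V}[G\restriction\gamma_\alpha]}$ by construction, and then a Tarski--Vaught check showing $\Hcountable^{\mathbb{V}[G\restriction\gamma_\alpha]}\prec\Hcountable^{\mathbb{V}[G]}$ — which holds because $\mathbb{P}_{\omega_2}$ is proper and Borel, so every element of $\Hcountable^{\mathbb{V}[G]}$ is added by a real, hence is named by a $\mathbb{P}_{\gamma_\alpha}$-name once $\gamma_\alpha$ is large enough for the witness. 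I expect the main obstacle to be the \textbf{verification of the winning condition}: one must show that the ``genericity'' of the $\gamma_\alpha$-th generic over $M_{\alpha+1}$ really does translate, via the hypothesis on $\dot{\mathbb{Q}}_\alpha$ and the fact that $E$ is definable over $\Hcountable$, into $A\cap M_{\alpha+1}\,E\,b_\alpha$ on a stationary set — this is where the combinatorics of \cite{mhd}'s Theorem 6.6 must be reproduced faithfully, keeping careful track of which stage of the iteration ``sees'' which parameter, and it is the only place where the precise form of the iteration hypothesis is used. The reduction to $\diamondeld$-style arguments ($\diamondstrABEcountable$ versus its club version) and the passage from stationary to club should, by contrast, be routine given Theorems \ref{mainthm} and \ref{mainthm2}.
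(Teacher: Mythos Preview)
Your approach diverges significantly from the paper's. The paper does not construct a winning strategy for $\good$ directly in the extension; instead it argues in two lines. First, the proof of Theorem~6.6 in \cite{mhd} (specifically Lemma~6.12 there) already establishes $\phicABE$ in the extension for all $F$ whose restrictions $F\restriction 2^\alpha$ are definable over $(\Hcountable;\in,\prec^*,\theta)$, with $\theta$ the truth predicate for $\Hcountable$ --- one only has to observe that the argument of Lemma~6.12 goes through verbatim for this wider class of $F$. Second, implication $1.\Rightarrow 2.$ of Theorem~\ref{mainthm2} (with $\kappa=\omega_1$) converts this into $\diamondstrABEcountable$. The whole point of the paper's framework here is that Theorem~\ref{mainthm2} lets one avoid re-running the \cite{mhd} combinatorics inside the game.

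Your direct route, as written, also has a genuine gap. You take $\mathcal{B}=\{b_\alpha\mid\alpha<\omega_1\}$ to be an arbitrary witness to $\langle[A]^\omega,B,E\rangle\le\omega_1$, i.e.\ an arbitrary dominating family. But the winning condition demands $A\cap M_{\alpha+1}\,E\,b_\alpha$ for the \emph{specific} $b_\alpha$ sitting at index $\alpha$, whereas a dominating family only guarantees that \emph{some} $b_\beta$ works; the strategy cannot reindex $\mathcal{B}$ mid-game. In \cite{mhd} the guessing sequence is not arbitrary: it is built from the generic branch through the tree of $\mathcal{P}(2)^+$ factors so that the correct $b$ lands at each level. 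You gesture at this role of $\mathcal{P}(2)^+$, but that is inconsistent with your prior commitment to $\mathcal{B}$ as a mere dominating family. Repairing this would essentially mean reproducing the full \cite{mhd} construction --- which is exactly what the paper's reduction via Theorem~\ref{mainthm2} is designed to bypass.
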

	
	The proof of Thm. 6.6 from \cite{mhd} gives almost that. The only place we need some improvement is Lemma 6.12.
	
	\begin{lem}[cf. Lemma 6.12, \cite{mhd}]
		Let $\ABE$ be an invariant definable over $(\Hcountable; \in)$, such that $\langle A,B,E \rangle \le \omega_1$. Assume that $\mathbb{T}$ is a tree such that
		\begin{itemize}
			\item $\mathbb{T}$ is $\omega$-distributive,
			\item For every $t \in \mathbb{T}$ there exists an uncountable antichain below $t$.
		\end{itemize}
	\end{lem}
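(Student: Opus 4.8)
The target is the (truncated) conclusion of the lemma, i.e.\ that forcing with $\mathbb{T}$ still produces the guessing structure for $\ABE$ demanded by the iteration theorem of \cite{mhd}; concretely, that in $V^{\mathbb{T}}$ there is a $g\colon\omega_1\to B$ which $E$-guesses the relevant (Borel, resp.\ $\Hcountable$-definable) functions on a \emph{stationary} set. The plan is to adapt the proof of Lemma~6.12 of \cite{mhd}; the only substantive change is the bookkeeping needed to replace ``club'' by ``stationary'' and to accommodate the slightly larger class of functions relevant to $\diamondstrABEcountable$. First I would fix, using $\ABEE\le\omega_1$, an enumeration $\langle b_\xi\mid\xi<\omega_1\rangle$ of a subset of $B$ such that every $a\in A$ satisfies $a\,E\,b_\xi$ for some $\xi$. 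Since $\mathbb{T}$ is $\omega$-distributive it adds no reals, hence no new Borel codes and no new countable sequences of ordinals; as $\ABE$ is definable over $\Hcountable$, the sets $A$, $B$ and the relation $E$ are therefore unchanged by $\mathbb{T}$, the statement $\forall a\,\exists\xi\ a\,E\,b_\xi$ remains true, and every proper initial segment of the generic branch of $\mathbb{T}$ already lies in $V$. The ``uncountable antichain below every node'' hypothesis is the device that supplies enough room to steer the value of $g$ correctly at each level.

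Next I would build a $\mathbb{T}$-name $\dot g$ for a function $\omega_1\to B$ by recursion on the levels $\alpha<\omega_1$, organized as a recursion on names. Having reached, at level $\alpha$, a node $t$ together with the relevant name $\dot a$ for ``the generic element of $A$ at stage $\alpha$'', I would fix an uncountable antichain $\{t_i\mid i<\omega_1\}$ below $t$; each $t_i$ decides $\dot a$ to some $a_i\in A$, and I would let $t_i$ force $\dot g(\alpha)=b_{\xi_i}$ for a chosen $\xi_i$ with $a_i\,E\,b_{\xi_i}$. Since only countably many earlier constraints accumulate along any branch and the antichain is uncountable, the construction never gets stuck. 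Limit levels cause no trouble: $\mathbb{T}$ adds no new countable sequence, so $\dot g\restriction\alpha$ is forced to be a genuine ground-model function, and this is all the recursion needs at a limit stage.

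The step I expect to be the main obstacle — and the place where the original Lemma~6.12 must be strengthened — is upgrading ``$\dot g$ guesses correctly cofinally often'' to ``$\dot g$ guesses correctly on a stationary set'', uniformly below $1_{\mathbb{T}}$, when the function being guessed is itself only given by a name. My plan here is a reflection argument: take a continuous $\in$-chain $\langle M_\alpha\mid\alpha<\omega_1\rangle$ of countable elementary submodels of a large $H(\theta)$ containing $\mathbb{T}$, the relevant name, and $\langle b_\xi\rangle$; on the club of $\alpha$ with $M_\alpha\cap\omega_1=\alpha$ and all the data below $\alpha$ captured inside $M_\alpha$, a density argument using the uncountable antichain below the node singled out in $M_\alpha$ shows that the generic can be extended to force the guess correct at $\alpha$, and a Fodor pressing-down argument then converts this into stationarily many correct $\alpha$. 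The delicate point inside this is that $\mathbb{T}$ must not destroy the stationarity of the good set; I expect the uncountable-antichain hypothesis to be leveraged precisely here — it is hard to see what other role it could play — the idea being that a tree branching uncountably below every node, combined with the density facts just mentioned, cannot thin out to a branch whose generic kills the relevant stationary set. Once $\dot g$ has been produced and this stationarity has been verified, the remainder is routine as in \cite{mhd}: one feeds $\dot g$ into the strategy for $\good$ supplied by the iteration theorem and checks that the resulting set of ``correct'' levels is forced stationary, which gives the desired instance of $\diamondstrABEcountable$ (and, with the obvious simplification, of $\diamondelABEcountable$) in $V^{\mathbb{T}}$.
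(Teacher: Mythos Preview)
Your proposal diverges substantially from the paper's argument, and it also leaves the crucial step unjustified.

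The paper's proof is two lines: run the \emph{original} proof of Lemma~6.12 from \cite{mhd} verbatim, but for the invariant $([A]^{\le\omega},B,E)$ and for functions $F:2^{<\omega_1}\to[A]^{\le\omega}$ whose restrictions $F\restriction 2^\alpha$ are definable over $(\Hcountable;\in,\prec^*,\theta)$ (with $\theta$ the truth predicate). That yields the appropriate instance of $\phicABE$ for this class of $F$, and then Theorem~\ref{mainthm2} (with $\kappa=\omega_1$) converts it into $\diamondstrABEcountable$. All of the work you are redoing by hand---the antichain splitting, the elementary-submodel bookkeeping, the stationarity---is already packaged inside the cited \cite{mhd} proof, and the passage from a $\Phi$-type statement to a strategic-diamond statement is exactly what Theorem~\ref{mainthm2} was set up to do. You never invoke Theorem~\ref{mainthm2}, which is the paper's entire point here.

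Your direct approach also has a genuine gap at the place you yourself flag. You write that ``a Fodor pressing-down argument then converts this into stationarily many correct $\alpha$'' and that you ``expect'' the uncountable-antichain hypothesis to prevent $\mathbb{T}$ from killing the relevant stationary set. Neither claim is substantiated: you have not specified any regressive function, and $\omega$-distributivity together with uncountable antichains below every node does \emph{not} by itself imply that $\mathbb{T}$ preserves stationary subsets of $\omega_1$ in general. In the \cite{mhd} argument stationarity is obtained not by a preservation theorem for $\mathbb{T}$ but by arranging, level by level, that below every node the set of conditions forcing a correct guess at $\alpha$ is dense for club-many $\alpha$; this is where the uncountable antichain is actually used, and it is a construction, not a Fodor argument. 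Your final sentence about ``feeding $\dot g$ into the strategy for $\good$ supplied by the iteration theorem'' is circular: the iteration theorem is what this lemma is meant to feed into, not the other way around.
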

	\begin{proof}
		Just follow the proof of Lemma 6.12 in \cite{mhd}, for an invariant $([A]^\omega,B,E)$, and a function $F:2^{<\omega_1}\rightarrow [A]^\omega$, which is definable over $(\Hcountable;\in,\prec^*,\theta)$, where $\theta$ is a truth predicate for $(\Hcountable; \in, \prec^*)$. The result now follows from Theorem \ref{mainthm2}. \qedhere
	\end{proof}
	
	\section{Applications}
	\subsection{Suslin tree}
	Let us review some standard definitions. A nonempty set $T\subseteq 2^{<\gamma}$ is a \emph{tree}, if it is closed for taking initial segments. Recall that $s\sqsubset t$ means that $s$ is an initial segment of $t$. We denote as $|t|$ the \emph{length} of $t$, i.e. the order-type its domain. For $\alpha<\gamma$, we define the \emph{$\alpha$-th level} of a tree $T$:
	$$T_\alpha=\{t \in T|\ |t|=\alpha\}.$$
	The \emph{restriction} of $T$ to $t \in T$ is the set of all extensions of $t$:
	$$T\restriction t =\{s \in T|\; t \sqsubset s\}.$$
	\begin{defin}
		For a countable, limit ordinal $\alpha$, a tree $T\subseteq 2^{<\alpha}$ is
		\begin{itemize}
			\item \emph{ever-branching} if 
			$$\forall\; t \in T \quad \exists \; t'\sqsupset t \quad t'^\frown0,t'^\frown1 \in T,$$
			\item \emph{normal} if 
			$$\forall\; t \in T \quad \forall\; \beta <\alpha \quad  \exists \;t'\sqsupset t \quad |t'|>\beta.$$
		\end{itemize}
	\end{defin}

	\begin{thm}[ cf. Thm. 3.1, \cite{mhd}]
		If $\diamondelnonmeagreH$ holds, then there exists a Suslin tree.
	\end{thm}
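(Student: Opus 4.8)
The plan is to build the Suslin tree by recursion on countable levels, using $\diamondelnonmeagre$ to guess, at stationarily many $\alpha$, an initial threat to Suslin-ness (a maximal antichain so far), and then to seal it off so that it cannot grow past level $\alpha$. Recall that by Proposition 3.10 (and the table), the relevant instance of the elementary diamond for $\operatorname{non}(\mathcal{M})$ is equivalent to the one for $([\operatorname{non}(\mathcal{M}) \text{-side}]^{\le\omega},\ldots)$, and the key clause $A\cap M \,E\, b$ says precisely that $b\in 2^\omega$ is a Cohen real over $M$, equivalently that $b$ avoids every meagre Borel set coded in $M$. The recursion will produce an increasing continuous sequence of countable, normal, ever-branching trees $T\restriction\alpha\subseteq 2^{<\alpha}$, together with a bookkeeping that, at a successor step $\alpha+1$, chooses the $\alpha$-th level $T_\alpha$ so that every node of $T\restriction\alpha$ has an extension in $T_\alpha$; the final tree is $T=\bigcup_{\alpha<\omega_1}T\restriction\alpha$, which is automatically a normal $\omega_1$-tree.

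The heart of the argument is the choice of the guessing function $F$ and the parameter sequences $\mathcal H$, $\mathcal B$. Fix a coding of countable trees and their subsets by reals. At stage $\alpha$ we will have (from a run of the game / the elementary chain) a countable elementary $M_{\alpha+1}\prec \Hcountable$ containing the tree $T\restriction\alpha$ built so far (this is arranged by feeding $T\restriction\alpha$, suitably coded, into $h_\alpha\in\mathcal H$, or rather by letting $\mathcal H$ code the whole construction-so-far, which is determined once the earlier $M_\beta$'s are). Define $F$ on $2^{<\omega_1}$ so that $F(s)$ codes, when $s$ codes (a branch through) a tree $T\restriction\alpha$ together with a maximal antichain $Z\subseteq T\restriction\alpha$ that is "still alive", a Borel meagre set $B_F(s)\subseteq 2^\omega$ whose avoidance encodes the instruction: the branch we are about to add at level $\alpha$ should go through some element of $Z$. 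Concretely, identifying each node $t\in T\restriction\alpha$ with the basic clopen set of reals coding branches extending $t$, one lets $B_F(s)$ be the complement (within the relevant Polish space of cofinal branches of $T\restriction\alpha$) of $\bigcup_{t\in Z}[t]$; since $Z$ is a maximal antichain this complement is meagre. Now apply $\diamondelnonmeagre$ to get $\mathcal B=\{b_\alpha\}$ and, for the particular $f\in 2^{\omega_1}$ coding our actual construction, an elementary chain $\{M_\alpha\}$ with the two bullets of the definition, so that for stationarily many $\alpha$ the real $b_\alpha$ is Cohen over $M_{\alpha+1}$, hence avoids $B_F(f\restriction\alpha)$ when the latter is coded in $M_{\alpha+1}$. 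At such $\alpha$, $b_\alpha$ reads off a cofinal branch of $T\restriction\alpha$ that meets $Z$; we put that branch (i.e.\ its limit) into $T_\alpha$, and we fill in the rest of $T_\alpha$ by arbitrary cofinal branches through every node of $T\restriction\alpha$ (using normality of $T\restriction\alpha$ to get at least one through each). This guarantees: every maximal antichain $Z$ of $T\restriction\alpha$ that is maximal in the final tree $T$ must already be met below level $\alpha$ by the new branch — hence $Z\subseteq T\restriction\alpha$, so $Z$ is countable.

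To finish, suppose $Z$ is any maximal antichain of $T$. Let $C$ be the club of $\alpha<\omega_1$ with $Z\cap(T\restriction\alpha)$ a maximal antichain of $T\restriction\alpha$ and with the construction below $\alpha$ "closed off" inside $M_{\alpha+1}$ in the sense needed for $Z\cap(T\restriction\alpha)$ to be coded there; closing under the construction and under a function reading off the relevant antichain gives such a club. By the stationarity clause of $\diamondelnonmeagre$ there is $\alpha\in C$ at which $b_\alpha$ is Cohen over $M_{\alpha+1}$; by the previous paragraph the branch added at level $\alpha$ meets $Z\cap(T\restriction\alpha)$, so no node of $T_\alpha$ — and hence no node of $T$ above level $\alpha$ — is $\sqsubset$-incompatible with all of $Z\cap(T\restriction\alpha)$, forcing $Z=Z\cap(T\restriction\alpha)$, which is countable. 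Thus $T$ has no uncountable antichain, i.e.\ $T$ is Suslin.

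The main obstacle I expect is the bookkeeping that makes the two roles of $\mathcal H$ and $\mathcal B$ fit together: $\mathcal B$ must be chosen \emph{before} we know which maximal antichain will be threatening us at stage $\alpha$, yet the "instruction" meagre set $B_F(f\restriction\alpha)$ depends on the antichain $Z$ enumerated by stage $\alpha$. This is handled exactly as in parametrized-diamond constructions: we do not fix $Z$ in advance; rather $f\restriction\alpha$ (equivalently $M_{\alpha+1}$, which sees the construction so far and a fixed enumeration of antichains) \emph{determines} which antichain we are trying to kill at stage $\alpha$, so $F$ is a genuine function of $f\restriction\alpha$ and the elementary-diamond guess $b_\alpha$ does the sealing. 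The secondary technical point is checking that $B_F(f\restriction\alpha)$ is Borel and meagre uniformly (so $F\restriction 2^\alpha$ is Borel), and that "$b_\alpha$ avoids it" really yields a branch meeting $Z$ rather than merely a branch of $T\restriction\alpha$ — this is where ever-branching/normality of the partial tree and the maximality of $Z$ in $T\restriction\alpha$ are used.
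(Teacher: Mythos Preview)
Your proposal has two genuine gaps, one structural and one fatal.

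\textbf{The structural problem (quantifier order / circularity).} In $\diamondelnonmeagre$ there is no function $F$ and no $f\in 2^{\omega_1}$; those belong to the $\Phi\ABE$ and $\diamondABE$ frameworks. The order of quantifiers is: fix $\mathcal H$, then obtain $\mathcal B$, and only then, for an arbitrary continuous sequence $\{\ast_\alpha\}$, get the chain of models. You propose to let $h_\alpha$ code $T\restriction\alpha$, but $T\restriction\alpha$ is built from $\{b_\beta:\beta<\alpha\}\subseteq\mathcal B$, and $\mathcal B$ is only produced \emph{after} $\mathcal H$ is fixed. So $\mathcal H$ cannot depend on the tree. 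Likewise, the construction of level $\alpha$ cannot depend on which antichain $Z$ will later be handed to us as $\{\ast_\alpha\}$: the tree must be completely determined by $\mathcal B$ alone, uniformly, before any antichain is mentioned. The paper resolves this by taking $\mathcal H=\emptyset$ and building $T$ entirely from the sequence $\{c_\alpha\}=\mathcal B$; only in the verification does a given maximal antichain enter, via $\{\ast_\alpha\}$.

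\textbf{The fatal gap (one branch does not seal).} Even granting the bookkeeping, your sealing step is wrong. At a good limit $\alpha$ you extract from $b_\alpha$ a \emph{single} cofinal branch of $T\restriction\alpha$ meeting $Z$, and then fill in $T_\alpha$ with ``arbitrary cofinal branches through every node''. But a maximal antichain $Z\subseteq T\restriction\alpha$ need not meet an arbitrary cofinal branch (for instance, in $2^{<\omega}$ the maximal antichain $\{0^n{}^\frown 1:n<\omega\}$ misses the branch $0^\omega$). Any such arbitrary branch you add produces a node at level $\alpha$ that extends no element of $Z\cap T\restriction\alpha$, so $Z$ is \emph{not} sealed at $\alpha$ and your final sentence (``no node of $T_\alpha$ is incompatible with all of $Z\cap(T\restriction\alpha)$'') is simply false. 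To seal $Z$ one must ensure that \emph{every} element of $T_\alpha$ extends some element of $Z$. The paper's device is precisely designed for this: at limit $\alpha$, for each $t\in T_{<\alpha}$ one fixes the $\prec^*$-least dense embedding $i_t:T\restriction t\to\overline{\mathbb C}_\omega$ and defines $b_t=i_t^{-1}[G_\alpha]$, where $G_\alpha$ is the filter on $\overline{\mathbb C}_\omega$ generated by the single real $c_\alpha$. Then, at a stage where $c_\alpha$ is Cohen over $M_{\alpha+1}\ni T_{<\alpha},\ast_\alpha$, \emph{every} $b_t$ is a generic branch (since each $i_t\in M_{\alpha+1}$), so every $b_t$ meets the dense set determined by $\ast_\alpha$, and the antichain is sealed at level $\alpha$ all at once. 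This simultaneous-genericity trick is the missing idea in your argument.
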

	\begin{proof}
		Let $\langle c_\alpha \mid \alpha<\omega_1\rangle$ be a sequence witnessing $\diamondelnonmeagreH$ for $\mathcal{H}=\emptyset$. Each $c_\alpha \in 2^\omega$ induces a filter in the partial order $2^{<\omega}$, so it uniquely extends to a filter in the algebra $\overline{\mathbb{C}}_\omega$ -- the completion of $2^{<\omega}$. Let $G_\alpha$ denote this extension.\\
		
		We will build a normal, ever-branching tree $T$ level by level, starting from $T_0=\{\emptyset\}$. If $\alpha=\beta+1$, we just extend each element of $T_\beta$, i.e. $$T_\alpha=\{t^\frown0,t^\frown1|\; t \in T_\beta\}.$$
		If $\alpha$ is limit, we extend each $t \in T_{<\alpha}$ to a branch $b_t\in 2^\alpha$ accordingly with $c_\alpha$: let $i_t$ be the $\prec^*$-least dense embedding of $T\restriction t$ into $\overline{\mathbb{C}}_\omega$, and put
		$$b_t=i_t^{-1}[G_\alpha].$$
		
		Finally, we put $T=\bigcup\{T_\alpha|\; \alpha<\omega_1\}$. We will check that $T$ has no uncountable antichain. Let $\langle \ast_\alpha \mid \alpha<\omega_1\rangle$ be an enumeration enumeration of a maximal antichain in $T$, such that $\ast_\beta = \langle \ast_\alpha 
		\mid \alpha<\beta \rangle$, for every limit $\beta<\omega_1$. A standard argument shows that for club-many $\alpha$, $\ast_\alpha$ is a maximal antichain in $T_{<\alpha}$, therefore we can find such $\alpha$, together with a model $M_{\alpha}$, which satisfies:
		\begin{itemize}
			\item $\ast_\alpha,\; T_{<\alpha} \in M_{\alpha}$,
			\item $c_\alpha$ is a Cohen real over $M_{\alpha}$.
		\end{itemize}
		In this case also $G_\alpha$ is $\overline{\mathbb{C}}_\omega$-generic over $M_{\alpha}$, and since each embedding $i_t$, for $t \in T_{<\alpha}$, is in $M_{\alpha}$, also $b_t$ is  a generic branch through $T_{<\alpha}$, over $M_{\alpha}$. We aim to show that $\ast_\alpha$ is a maximal antichain in $T_{<\alpha}$. Fix $t \in T_{<\alpha}$ if $t$ has an initial segment in $\ast_{\alpha}$, there is nothing to prove. Otherwise, consider the set
		$$\{t'\sqsupset t \mid \exists \; \beta<\alpha \quad t'\cap \beta \in \ast_{\alpha} \}.$$
		Since $\ast_\alpha$ is maximal in $T_{<\alpha}$, this set is dense above $t$ in $T_{<\alpha}$. Since the branch $b_t$ was generic over $M_{\alpha}$, $b_t$ must have an initial fragment in $\ast_{\alpha}$. This concludes the proof. \qedhere
	\end{proof}
	\subsection{$\clubsuit_{\operatorname{AD}}(\omega_1,1,\operatorname{DIAG})$}
	The result we present was originally proved in \cite{mhd}. We recast in the language of the axioms $\clubsuit_{\operatorname{AD}}(\kappa,\mathcal{S},\lambda)$, that were introduced in \cite{clubad}, and later applied in \cite{clubad2}.
	
	\begin{defin}
		The principle $\clubsuit_{\operatorname{AD}}(\omega_1,1,\operatorname{DIAG})$
		asserts the existence of a sequence $\langle A_\alpha|\; \alpha<\omega_1\rangle$ such that:
		\begin{enumerate}
			\item For each limit $\alpha<\omega_1$, $A_\alpha \subseteq \alpha$ is cofinal;
			\item For any $\alpha<\beta<\omega_1$, $\sup(A_\alpha \cap A_\beta)<\alpha$;
			\item For each sequence $\langle B_\gamma|\; \gamma<\omega_1\rangle $ consisting of cofinal subsets of $\omega_1$, the set
			$$\{\alpha<\omega_1 \mid \forall{\gamma<\alpha}\quad B_\gamma\cap A_\alpha\text{ is cofinal in }\alpha\}$$
			is stationary.
		\end{enumerate}
	\end{defin}
	\begin{thm}[cf. Thm. 5.5, \cite{mhd}]
		$\diamondelbH$ implies $\clubsuit_{\operatorname{AD}}(\omega_1,1,\operatorname{DIAG})$, with each $A_\alpha$ having the order-type $\omega$.
	\end{thm}
	
	\begin{proof}
		We fix a $\diamondelbH$-sequence $\langle d_\alpha|\; \alpha<\omega_1\rangle$, subject to a sequence of initial parameters
		$$\mathcal{H}=\{\{B_n^\alpha\}_{n<\omega} \mid \omega \cdot \omega \leq \alpha<\omega_1\},$$
		where for every limit $\alpha \ge \omega \cdot \omega$, $\{B_n^\alpha\}_{n<\omega}$ is a fixed partition of $\alpha$ into infinite sets, such that whenever $n<m$, each element of $B_n^\alpha$ is below each element of $B_m^\alpha$. 
		
		Informally, we pick to $A_\alpha$ the first $d_\alpha(n)$-many elements from each set $B_n^\alpha$. More precisely, we enumerate bijectively each of the sets
		$$B_n^\alpha=\{b^\alpha_{n,i}\}_{i<\omega},$$
		and set
		$$A_\alpha=\bigcup_{n<\omega}\{b^\alpha_{n,0},\ldots,b^\alpha_{n,d_\alpha(n)}\}.$$
		Why does that work? Fix a family $\{B_\gamma|\; \gamma<\omega_1\}$ consisting of cofinal subsets of $\omega_1$. We associate with it a continuous sequence of hereditarily countable sets $\ast_\alpha$, by setting
		$$\ast_\alpha=\{B_\delta\cap \alpha|\; \delta<\alpha\}.$$
		There exists a stationary set of $\alpha$, for which $d_\alpha$ is not dominated by any real from some model $M_{\alpha}$ containing $\ast_\alpha=\langle \ast_\beta|\; \beta<\alpha\rangle $, and $\{B_n^\alpha\}_{n<\omega}$. It is clear that for each $\beta<\alpha$, $B_\beta \cap \alpha \in M_{\alpha}$. A standard closing-off argument shows that for club-many $\alpha$'s, each of the sets $\{B_\delta\cap \alpha\}_{\delta<\alpha}$ is cofinal in $\alpha$. We work with fixed $\beta<\alpha$, such that
		\begin{itemize}
			\item $B_\beta\cap \alpha$ is cofinal in $\alpha$,
			\item $B_\beta \cap \alpha \in M_{\alpha}$,
			\item $d_\alpha$ is an unbounded real over $M_{\alpha}$.
		\end{itemize}
		Let $A=\{n<\omega|\; B_\beta \cap B^\alpha_n \neq \emptyset\}$. Obviously, since $B_\beta \cap \alpha$ is cofinal, this set is infinite and belongs to $M_{\alpha}$.
		
		Let $A(n)$ denote the $n$-th element of $A$,  and set
		$$r(n)=\min\{i<\omega|\; b^\alpha_{A(n),i} \in B_\beta\}.$$
		Given that $r \in \omega^\omega\cap M_{\alpha}$, the inequality $r(n)<d_{\alpha}(A(n))$ holds for infinitely many $n<\omega$. It's now easy to see that $A_\alpha\cap B_\beta$ is infinite. \qedhere
	\end{proof}
	
	\subsection{$\clubsuit_{\operatorname{AD}}(\omega_1,\omega,\operatorname{DIAG})$}
	We prove another result in the same spirit.
	\begin{defin}
		The principle $\clubsuit_{\operatorname{AD}}(\omega_1,\omega,\operatorname{DIAG})$ asserts the existence of a sequence $\{B^n_\alpha|\; \alpha<\omega_1, \; n<\omega\}$ such that:
		\begin{enumerate}
			\item For each limit $\alpha<\omega_1$, and each $n<\omega$, $B^n_\alpha \subseteq \alpha$ is cofinal;
			\item For any $\alpha<\beta<\omega_1$, $m,n<\omega$, $\sup(B^n_\alpha \cap B^m_\beta)<\alpha$;
			\item For each sequence $\{X_\gamma|\; \gamma<\omega_1\}$ consisting of cofinal subsets of $\omega_1$, the set
			$$\{\alpha<\omega_1|\; \forall\,{\gamma<\alpha}\; \forall\, n<\omega\quad X_\gamma\cap B^n_\alpha \text{ is cofinal in }\alpha\}$$
			is stationary.
		\end{enumerate}
	\end{defin}

	\begin{lem}\label{dominating}
		Suppose $r,d \in \omega^\omega$ are strictly increasing, and $r<^* d$. Let $f\in \omega^\omega$ be defined by
		$$f(n)=d\circ\ldots\circ d(n),$$
		where the composition is taken $n$ times. Then for all but finitely many $n<\omega$ we have
		$$[f(n),f(n+1))\cap \operatorname{rg}(r)\neq \emptyset.$$
	\end{lem}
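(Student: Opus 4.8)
The idea is that $f$ grows so fast that consecutive values $f(n), f(n+1)$ are "$d$-separated," and since $r$ eventually dominates $d$ (really: since $d$ pushes things forward at least as aggressively as $r$ past some threshold), every block of the form $[f(n), f(n+1))$ must absorb at least one value of $r$. First I would fix $N<\omega$ large enough that $r(k) < d(k)$ for all $k \geq N$, and also (using that $r$ is strictly increasing, hence $r(k) \geq k$, and eventually $r(k)\geq N$) arrange that $f(n) \geq N$ for all $n \geq N_0$ for a suitable $N_0$; note $f$ is strictly increasing and $f(n) \geq n$ since $d$ is strictly increasing, so this is automatic for $n$ large. The key recursion to record is $f(n+1) = d^{(n+1)}(n+1) = d(d^{(n)}(n+1))$ together with the monotonicity estimate $f(n+1) \geq d(f(n))$, which follows because $d^{(n)}$ is increasing and $d^{(n)}(n+1) \geq d^{(n)}(n) = f(n)$.

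Now I would argue as follows for $n \geq \max(N_0, N)$. Let $m$ be the least element of $\operatorname{rg}(r)$ with $m \geq f(n)$ — this exists since $r$ is increasing and unbounded — say $m = r(k)$. If $m < f(n+1)$ we are done, so suppose toward a contradiction $m \geq f(n+1) \geq d(f(n))$. Minimality of $m$ gives $r(k-1) < f(n)$ (if $k \geq 1$; the case $m = r(0)$ is handled by taking $n$ large enough that $f(n) > r(0)$). Since $r$ is strictly increasing, $r(k-1) < f(n)$ forces $k - 1 < f(n)$, i.e. $k \leq f(n)$. But $f(n) \geq N$, so $k \leq f(n)$ and we can compare $r(k)$ with iterates of $d$: I want $r(k) < d(f(n))$, which will contradict $m = r(k) \geq d(f(n))$. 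To get this, observe $r(k) < d(k)$ (as $k \geq N$ once we also ensure $k \geq N$, which holds since $r(k) = m \geq f(n) \geq N$ and $r(k) \geq k$... wait, that gives $k \leq m$, not $k \geq N$; instead use $r(k-1) \geq r(N-1) \geq$ something) — here I would instead directly use $r(k) < f(n)$-type bounds: from $k \leq f(n)$ and $d$ increasing, $d(k) \leq d(f(n))$, and from $k \geq N$ we have $r(k) < d(k) \leq d(f(n))$, contradiction.

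The one genuinely delicate point — and the main obstacle — is making sure $k \geq N$ so that $r(k) < d(k)$ applies; the bound $r(k) \geq f(n) \geq N$ only tells us the \emph{value} $r(k)$ is large, not the \emph{index} $k$. The clean fix is to note that $r$ strictly increasing implies $r(k) \geq r(N) + (k - N) \geq k$ is the wrong direction; rather, choose $N_0$ so large that $f(n) > r(N)$ for all $n \geq N_0$, and then $m \geq f(n) > r(N)$ forces $k > N$, hence $r(k) < d(k)$, as needed. With that adjustment the inequalities chain together: $d(f(n)) \leq f(n+1) \leq m = r(k) < d(k) \leq d(f(n))$, which is absurd, so in fact $m < f(n+1)$, proving $[f(n), f(n+1)) \cap \operatorname{rg}(r) \neq \emptyset$ for all sufficiently large $n$. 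I would present this as a short chain of inequalities rather than a proof by contradiction if it reads more cleanly, but the contradiction format makes the use of minimality of $m$ most transparent.
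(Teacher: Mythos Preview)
Your argument is correct and rests on the same two ingredients the paper uses: the growth estimate $f(n+1)>d(f(n))$ and the chain $r(k)<d(k)\le d(f(n))<f(n+1)$ once the index $k$ is both past the $r<^*d$ threshold and bounded by $f(n)$. Your handling of the threshold (choosing $N_0$ so that $f(n)>r(N)$, which forces $k>N$) is the right fix for the ``delicate point'' you flagged.

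The paper packages the same inequalities more compactly as a forward implication rather than a minimality--contradiction argument: after assuming without loss of generality that $r(k)>k$ for all $k$, it simply shows
\[
r(k)<f(n)\ \Longrightarrow\ r(k+1)\le d(k+1)\le d(r(k))\le d(f(n))<f(n+1),
\]
from which the conclusion follows by an easy induction (once some $r(k_0)<f(n_0)$, every interval $[f(n),f(n+1))$ for $n\ge n_0$ catches a value of $r$). The gain is purely cosmetic: the WLOG absorbs all the threshold bookkeeping you do explicitly, and using $k+1\le r(k)$ in place of your $k\le f(n)$ lets the chain run in one line. Substantively the two proofs are the same.
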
 
	\begin{proof}
		Without loss of generality $r(k)>k$ for all $k$. Suppose $r(k)<f(n)$. We must show that $r(k+1)<f(n+1)$. But we have
		$$r(k+1)\le d(k+1)\le d(r(k))\le d(f(n))<f(n+1).\qedhere$$ 
	\end{proof}
	
	Recall that $\diamondstareldH$ is $\diamondeldH$ that guesses on a club, instead of a stationary set.
	
	\begin{thm}
		$\diamondstareldH$ implies $\clubsuit_{\operatorname{AD}}(\omega_1,\omega,\operatorname{DIAG})$, with each $A_\alpha$ having the order-type $\omega$.
	\end{thm}
	\begin{proof}
		By the previous theorem we can fix a sequence $\langle A_\alpha|\; \alpha\in \lim(\omega_1)\rangle$ witnessing $\clubsuit_{\operatorname{AD}}(\omega_1,1,\operatorname{DIAG})$, each $A_\alpha$ having the order-type $\omega$, and all intervals of the form $[A_\alpha(n),A_\alpha(n+1))$ being infinite. Let $\langle d_\xi|\; \xi \in \lim(\omega_1)\rangle$ be a sequence of (strictly increasing) reals witnessing $\diamondstareldH$ for initial parameters $h_\alpha=A_\alpha$. Let $\phi:\omega \times \omega \rightarrow \omega$ be a definable bijection, and define
		$$B^n_\alpha=\{A_\alpha(i)|\; i \in \bigcup_{k<\omega} I^\alpha_{\phi(n,k)} \},$$
		where 
		$$I^\alpha_k=[d^k_\alpha(k),d^{k+1}_\alpha(k+1) ),$$
		for each $k<\omega$ (here $d^n$ denote the $n$-th iteration of $d$).
		
		To see that this works, consider a family $\{X_\alpha|\; \alpha<\omega_1\}$ of cofinal subsets of $\omega_1$, and put 
		$$\ast_\delta=\{X_\beta \cap \alpha|\; \alpha<\beta<\delta \},$$ for all $\delta \in \omega_1$.
		Clearly this sequence is continuous. By $\diamondstareldH$ there exists a club set of ordinals $\xi < \omega_1$, for which we can choose the corresponding elementary submodels $\langle M_\xi$, satisfying:
		\begin{itemize}
			\item $\{X_\beta|\; \beta<\xi\}\in M_{\xi}$,
			\item $d_\xi$ is a dominating real over $M_{\xi}$,
			\item $\sup(A_\xi \cap X_\beta)=\xi$, for all $\beta<\xi$.
		\end{itemize}
		Notice that here we used the feature of guessing on a club: the sets 
		$$\{\xi<\omega_1|\; \forall \; \beta<\xi\quad A_\xi \cap X_\beta\text{ is cofinal} \},$$
		and
		$$\{\xi<\omega_1|\; d_\xi \text{ is dominating over }M_{\xi}\}$$
		need to intersect on a club. Now fix $\beta<\xi$, where $\xi$ is as above, and let 
		$$r=\langle i<\omega|\; A_\xi(i)\in X_\beta \rangle.$$
		By Lemma \ref{dominating} the set $r$ is infinite, and $r \in M_{\xi}$, therefore $r \cap I^\xi_k \neq \emptyset$ for all but finitely many $k<\omega$. It follows that for any $n<\omega$, $r \cap I^\xi_{\phi(n,k)}\neq \emptyset$, for all but finitely many $k<\omega$. But $i \in r\cap I^\xi_{\phi(n,k)}$ implies $A_\xi(i) \in X_\beta \cap B^n_\xi$. It follows that $X_\beta \cap B^n_\xi$ is infinite, and therefore cofinal in $\xi$. \qedhere
	\end{proof}
	
	\subsection{$\aleph_1$-generated P-point}
	
	\begin{thm}[cf. Thm. 7.8, \cite{mhd}]
		If $\diamondsuit_{\operatorname{EL}}(\mathfrak{r},(\Hcountable; \in))$ holds, then there exists an $\aleph_1$-generated P-point. 
	\end{thm}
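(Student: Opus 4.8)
The plan is to build the P-point as an increasing union of a filter base $\{U_\alpha \mid \alpha<\omega_1\}$, where at limit stages I use the $\diamondsuit_{\operatorname{EL}}(\mathfrak{r})$-sequence to ``diagonalize'' against potential counterexamples to the P-point property, and at all stages I keep the filter $\omega_1$-generated by construction. Recall that $\mathfrak{r}=(2^\omega,[\omega]^\omega,\text{``restriction is eventually constant''})$, and that for a countable $M\prec N$ the relation $A\cap M\, E\, b$ says precisely that $b\in[\omega]^\omega$ is a reaping real over $M$, i.e. for every $x\in 2^\omega\cap M$ the restriction $x\restriction b$ is eventually constant — equivalently, for every $Y\in\mathcal{P}(\omega)\cap M$, either $b\subseteq^* Y$ or $b\subseteq^*\omega\setminus Y$. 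So the guessing sequence hands us, against any prescribed sequence of parameters $\mathcal{H}$, infinite sets $b_\alpha$ that are reaping over suitably chosen models $M_{\alpha+1}$ on a stationary set.

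First I would set up the bookkeeping. Enumerate $\mathcal{H}=\{h_\alpha\mid\alpha<\omega_1\}$ so that $h_\alpha$ codes (a name for, or a listing of) the approximation $\{U_\beta\mid\beta<\alpha\}$ built so far together with a candidate function $f_\alpha:\omega\to\omega$ that we will want to ``tame'' — since there are only $\omega_1$-many reals to handle in the final model once we know the filter is $\omega_1$-generated, and we can thread candidates through by a standard stationary-set argument. Then I apply $\diamondsuit_{\operatorname{EL}}(\mathfrak{r})$ to obtain $\mathcal{B}=\{b_\alpha\mid\alpha<\omega_1\}$. Now recursively define $U_{\alpha+1}$ by adding a single new generator $u_{\alpha+1}$: at a stage where $M_{\alpha+1}$ contains all the previous generators and the current candidate $f$, the reaping real $b_\alpha$ has, for each $Y$ in $M_{\alpha+1}$, one of $b_\alpha\subseteq^* Y$ or $b_\alpha\cap Y$ finite; intersecting $b_\alpha$ with the finitely-many already-committed generators (which lie in $M_{\alpha+1}$) we may assume $b_\alpha$ is almost-contained in each of them, and then set $u_{\alpha+1}=b_\alpha$ (or $b_\alpha$ thinned further to pseudo-intersect along a chosen diagonal sequence from $M_{\alpha+1}$, to get the P-point property). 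At limits take unions of the generator sets. The filter generated by $\{u_\beta\mid\beta<\omega_1\}$ is then an ultrafilter: given any $Y\subseteq\omega$ in the final model, $Y$ appears in some $M_{\alpha+1}$ on the stationary guessing set, hence $b_\alpha=u_{\alpha+1}$ decides $Y$, so either $Y$ or its complement is in the filter. That it is a P-point follows because any countable subfamily of generators already lies in some $M_{\alpha+1}$ on the stationary set, and the thinning step ensures $b_\alpha$ pseudo-intersects the diagonal of that family.

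The main obstacle — and the place requiring genuine care — is the interleaving of two demands on the models $M_{\alpha+1}$ that pull in opposite directions: to decide a given set $Y$, $M_{\alpha+1}$ must \emph{contain} $Y$, so $Y$ must have been ``scheduled'' into some $h_\beta$ before stage $\alpha$; but the filter-so-far is only determined by the construction, which itself depends on $\mathcal{B}$, which depends on $\mathcal{H}$. Resolving this circularity is the standard but delicate point: one fixes $\mathcal{H}$ first as a sequence of \emph{names}/instructions for the recursion (the construction of $U_\beta$ from $\{U_\gamma\mid\gamma<\beta\}$ and from $b_\gamma$ is uniform, so $h_\alpha$ can be taken to be just ``the rule plus the ordinal $\alpha$''), obtains $\mathcal{B}$, then runs the recursion, and finally verifies that on the stationary guessing set $M_{\alpha+1}$ does compute the correct initial segment of the filter because the rule is in $M_{\alpha+1}$ and $\{b_\beta\mid\beta<\alpha\}$ is too (via $\{M_\gamma\mid\gamma<\alpha\}\in M_{\alpha+1}$, since the $b_\gamma$ were used inside those models). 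The P-point thinning requires additionally that $M_{\alpha+1}$, being countable and elementary, ``knows'' a pseudo-intersection of each countable subfilter it contains, so that $b_\alpha$, reaping over it, can be further intersected with that pseudo-intersection while staying infinite — here one uses that $b_\alpha$ reaps \emph{every} set of $M_{\alpha+1}$, including the witnessing pseudo-intersections. Everything else — $\omega_1$-generation, the chain condition on generators, elementarity of the $M_{\alpha+1}$ — is routine once this scheduling is pinned down, and this is exactly the pattern already used in Theorem~3.1 above (the Suslin tree construction), which I would follow closely.
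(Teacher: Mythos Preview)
Your outline has the right architecture, but there are two genuine gaps that make the argument fail as written.

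First, and most critically, the step ``set $u_{\alpha+1}=b_\alpha$ (or $b_\alpha$ thinned)'' does not work. A reaping real $b_\alpha$ over $M_{\alpha+1}$ satisfies, for each $Y\in\mathcal{P}(\omega)\cap M_{\alpha+1}$, that $b_\alpha\subseteq^* Y$ \emph{or} $b_\alpha\subseteq^*\omega\setminus Y$; it does not let you choose which. In particular, if $d\in M_{\alpha+1}$ is a pseudointersection of the previously chosen generators, it may well be that $b_\alpha\subseteq^*\omega\setminus d$, and then no thinning of $b_\alpha$ extends the filter. (Your phrase ``intersecting $b_\alpha$ with the finitely-many already-committed generators'' is also off: at stage $\alpha$ there are countably many, not finitely many.) The paper sidesteps this completely: it fixes in advance bijections $h_\beta:\beta\to\omega$ as the parameters $\mathcal{H}$, obtains the guessing sequence $\{r_\beta\}$, and at each stage $\beta$ forms a \emph{definable} pseudointersection $f_{<\beta}$ of $\{r^*_\gamma\mid\gamma<\beta\}$ (using $h_\beta$ for the enumeration) and sets $r^*_\beta=f_{<\beta}[r_\beta]$ --- that is, $r_\beta$ is used to \emph{select indices into} the pseudointersection, not to intersect with it. This makes $r^*_\beta$ infinite and $\subseteq^*$-below every earlier generator at \emph{every} stage, so the filter is automatically a P-filter; the reaping property is invoked only afterwards, in the ultrafilter verification.

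Second, your resolution of the circularity is incorrect. You claim $\{b_\gamma\mid\gamma<\alpha\}\in M_{\alpha+1}$ because $\{M_\gamma\mid\gamma<\alpha\}\in M_{\alpha+1}$, ``since the $b_\gamma$ were used inside those models''. But the $b_\gamma$ are not elements of the $M_\gamma$ --- a reaping real over $M$ is never in $M$ --- and nothing in the definition of $\diamondsuit_{\operatorname{EL}}$ puts $\mathcal{B}$ into the chain. The mechanism you never invoke is the continuous sequence $\{\ast_\alpha\}$. In the paper's proof, the construction of $\{r^*_\beta\}$ uses no chain of models at all; the chain appears only in the verification, and a \emph{different} chain is produced for each candidate $a\subseteq\omega$, by taking $\ast_\alpha$ to code $a$ together with $\{r_\gamma\mid\gamma<\alpha\}$. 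Then $M_{\beta+1}$ contains $a$, $\{r_\gamma\}_{\gamma<\beta}$ and $\{h_\gamma\}_{\gamma\le\beta}$, hence computes $f_{<\beta}$, and $r_\beta$ reaping over $M_{\beta+1}$ decides the preimage $f_{<\beta}^{-1}[\omega\setminus a]$, which translates to $r^*_\beta\subseteq^* a$ or $r^*_\beta\subseteq^*\omega\setminus a$.
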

	\begin{proof}
		Let $\langle r_\alpha|\; \alpha<\omega_1\rangle$ be a $\diamondsuit_{\operatorname{EL}}(\mathfrak{r},(\Hcountable; \in))$-sequence, for the sequence of initial parameters consisting of bijections $h_\alpha:\alpha\rightarrow \omega$. Let $\langle r^*_\alpha|\; \alpha<\omega_1\rangle$ be a sequence of reals (more precisely, infinite subsets of $\omega$) defined inductively by conditions:
		\begin{enumerate}
			\item $r^*_0=r_0$,
			\item If $r^*_\alpha$ is known for $\alpha<\beta$, then define $f_{<\beta}:\omega \rightarrow \omega$ by 
			$$f_{<\beta}(n)=\min(\bigcap_{k<n+1}r^*_{h_\beta(k)}\setminus f_{<\beta}(n-1)),$$
			for all $n>0$ (let $f(0)=0$). Then the image of $f_{<\beta}$ is a pseudo-intersection of the family $\langle r^*_\alpha|\; \alpha<\beta\}$, that is definable from $h_\beta,\{r^*_\alpha|\; \alpha<\beta\rangle $. Finally, put
			$$r^*_\beta=\{f_{<\beta}(n)|\; n\in r_\beta \}.$$
		\end{enumerate}
		Let $U=\{a \in [\omega]^\omega|\; \exists \; \beta<\omega_1 \quad r^*_\beta \subseteq^* a\}$. Clearly $U$ is a P-filter. Let us verify that it is also an ultrafilter. Let $a \in [\omega]^\omega$ be arbitrary. There exists cofinally many ordinals $\beta<\omega_1$, for which we can pick an elementary submodel $M_\beta \prec\Hcountable$, such that:
		
		\begin{itemize}
			\item $r_\beta$ is reaping over $M_{\beta}$,
			\item $a, \langle r_\alpha|\; \alpha<\beta\rangle , \langle h_\alpha|\; \alpha\le \beta\rangle \in M_{\beta}$.
		\end{itemize} 
		Note that in this case, also $\langle r^*_\alpha|\; \alpha<\beta\rangle \in M_{\beta}$. Fix such ordinal $\beta$. Let $b\subseteq \omega$ be such that $f_{<\beta}[b]=f_{<\beta}[\omega]\setminus a$. Notice that $b\in M_{\beta}$. Given that $r_\beta$ is reaping over $M_{\beta}$, either $r_\beta \subseteq^* b$, or $|r_\beta \cap b|<\omega$. 
		
		In the first case, we got 
		$$r^*_\beta=f_{<\beta}[r_\beta]\subseteq^*f_{<\beta}[b]=f_{<\beta}[\omega]\setminus a \subseteq \omega \setminus a.$$
		This ensures that $\omega \setminus a \in U$.
		
		In the second case, we got
		$$|r^*_\beta \cap f_{<\beta}[b]|<\omega,$$
		and in turn
		$$|r^*_\beta \cap(f_{<\beta}[\omega]\setminus a)|<\omega.$$
		It follows that $r^*_\beta \subseteq^* a$, and so $a \in U$. \qedhere
	\end{proof}
	
	\subsection{$\clubsuit_{\operatorname{w}^2}$}
	
	The axiom $\clubsuit_{\operatorname{w}^2}$ is a weakening of $\clubsuit$ studied initially in \cite{sticks1}, and \cite{sticks2}. 
	\begin{defin}
		The principle $\clubsuit_{\operatorname{w}^2}$ asserts the existence of a sequence $\langle C_\alpha|\; \alpha \in \lim(\omega_1)\rangle$,
		where each $C_\alpha$ is a cofinal subset of $\alpha$, and for every cofinal $X \subseteq \omega_1$, the set
		$$\{\alpha<\omega_1 \mid |C_\alpha \setminus X|<\omega \text { or } |C_\alpha \cap X|<\omega\}$$
		is stationary.
	\end{defin}
	\begin{thm}[cf. Lemma 2.4, \cite{milden}]
		$\diamondsuit_{\operatorname{EL}}(\mathfrak{r},(\Hcountable; \in)) \implies \clubsuit_{\operatorname{w}^2}$
	\end{thm}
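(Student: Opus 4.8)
The plan is to mimic the structure of the previous applications: fix an $\diamondsuit_{\operatorname{EL}}(\mathfrak{r})$-sequence $\{r_\alpha \mid \alpha < \omega_1\}$ with respect to a suitable sequence of initial parameters $\mathcal{H}$, then build the $\clubsuit_{\operatorname{w}^2}$-sequence $\{C_\alpha \mid \alpha \in \lim(\omega_1)\}$ by using $r_\alpha$ at stage $\alpha$ to "decode" which half of some canonical cofinal set to keep. Concretely, for each limit $\alpha$ I would let $h_\alpha : \alpha \to \omega$ be a fixed bijection, so that subsets of $\alpha$ correspond to subsets of $\omega$; pulling back along $h_\alpha$, an element of $2^\omega$ becomes a subset of $\alpha$. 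The key point is that $\mathfrak{r} = (2^\omega, [\omega]^\omega, \text{``restriction eventually constant''})$ guesses, on a stationary set, an $r_\alpha$ that is \emph{reaping} over a model $M_{\alpha+1}$, meaning: for every $A \in [\omega]^\omega \cap M_{\alpha+1}$, either $r_\alpha \subseteq^* A$ or $r_\alpha \cap A$ is finite — i.e. $r_\alpha$ is not "split" by any ground-model infinite set.

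Here is the construction I would try. Fix a bijection $e_\alpha : \omega \to \alpha$ with cofinal range (e.g. built from $h_\alpha$), and simply set $C_\alpha = e_\alpha[r_\alpha]$, viewing $r_\alpha \in 2^\omega$ as the infinite set $\{n : r_\alpha(n) = 1\}$; since $\mathfrak{r}$-witnessing reals are infinite subsets of $\omega$, and $e_\alpha$ has cofinal range, we need to arrange $C_\alpha$ is cofinal in $\alpha$ — this can be forced by thinning/reindexing so that the range of $r_\alpha$ always meets a cofinal family of "blocks" of $e_\alpha$, exactly as in the $\clubsuit_{\operatorname{AD}}$ proofs above. (If this is awkward one can instead use the previous theorem to start from a $\clubsuit_{\operatorname{AD}}(\omega_1,1,\operatorname{DIAG})$-sequence $\{A_\alpha\}$ and let $C_\alpha$ consist of the elements $A_\alpha(i)$ with $i \in r_\alpha$.) Now given a cofinal $X \subseteq \omega_1$, set $\ast_\delta = \{X \cap \alpha \mid \alpha < \delta\}$, a continuous sequence. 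By $\diamondsuit_{\operatorname{EL}}(\mathfrak{r})$ there is a chain $\{M_\alpha\}$ with $\ast_\alpha, h_\alpha \in M_{\alpha+1}$ and $r_\alpha$ reaping over $M_{\alpha+1}$ on a stationary set of $\alpha$; on a club one also has $X \cap \alpha$ cofinal in $\alpha$ and $X \cap \alpha \in M_{\alpha+1}$, so the intersection is stationary. For such $\alpha$, let $Y = e_\alpha^{-1}[X \cap \alpha] \in [\omega]^\omega \cap M_{\alpha+1}$ (infinite because $X \cap \alpha$ is cofinal and $e_\alpha$ has cofinal range). Since $r_\alpha$ is reaping over $M_{\alpha+1}$, either $r_\alpha \subseteq^* Y$ or $|r_\alpha \cap Y| < \omega$; applying $e_\alpha$ this translates to $|C_\alpha \setminus X| < \omega$ in the first case and $|C_\alpha \cap X| < \omega$ in the second. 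Hence the $\clubsuit_{\operatorname{w}^2}$-set is stationary.

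The routine obligations are: checking the "closing-off" facts (club-many $\alpha$ have $X \cap \alpha$ cofinal and $e_\alpha^{-1}[X\cap\alpha]$ infinite and in $M_{\alpha+1}$, using $h_\alpha \in M_{\alpha+1}$), and that "restriction of $r_\alpha$ to a ground-model set is eventually constant" is literally the reaping dichotomy — this is exactly the row of the table for $\mathfrak{r}$, so no work is needed beyond citing it. The one genuine obstacle is ensuring each $C_\alpha$ is \emph{cofinal} in $\alpha$ while still being of the form $e_\alpha[r_\alpha]$ for an \emph{arbitrary} guessed real $r_\alpha$ (which we do not control): the clean fix is to route through the $\clubsuit_{\operatorname{AD}}(\omega_1,1,\operatorname{DIAG})$-sequence of the earlier theorem, whose blocks $[A_\alpha(n),A_\alpha(n+1))$ are infinite, so that $C_\alpha = \{A_\alpha(i) : i \in r_\alpha\}$ is automatically cofinal regardless of which infinite $r_\alpha$ shows up, and the reaping dichotomy for $r_\alpha$ against $\{i : A_\alpha(i) \in X\}$ transfers verbatim to the dichotomy required of $C_\alpha$ against $X$.
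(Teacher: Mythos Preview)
Your argument is essentially the paper's proof: fix enumerations of the limit ordinals as initial parameters, set $C_\alpha$ to be the image of the guessed reaping real $r_\alpha$, and for a given cofinal $X\subseteq\omega_1$ apply the reaping dichotomy to the pullback $e_\alpha^{-1}[X\cap\alpha]\in M_{\alpha+1}$.

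The one place you diverge is the cofinality obstacle you flag at the end, and here the paper's fix is much simpler than either of your proposed workarounds. Rather than a bijection $e_\alpha:\omega\to\alpha$, the paper takes $\sigma_\alpha:\omega\to\alpha$ to be \emph{increasing} with cofinal range. Then for \emph{any} infinite $r_\alpha\subseteq\omega$, the image $\sigma_\alpha[r_\alpha]$ is automatically cofinal in $\alpha$, and no thinning, block structure, or detour through $\clubsuit_{\operatorname{AD}}(\omega_1,1,\operatorname{DIAG})$ is needed. (Your second fix would also require knowing $\diamondsuit_{\operatorname{EL}}(\mathfrak{r})\Rightarrow\diamondsuit_{\operatorname{EL}}(\mathfrak{b})$ via a definable Tukey connection, which is true but an unnecessary extra step.) With this one-line change your proof becomes identical to the paper's.
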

	
	\begin{proof}
		Let $\{r_\alpha|\; \alpha \in \omega_1 \}\subseteq [\omega]^\omega$ be a sequence witnessing $\diamondelrH$ for the parameters $h_\alpha=\sigma_\alpha$, where for each limit $\alpha < \omega_1$ $\sigma_\alpha:\omega \rightarrow \alpha$ is an increasing function onto a cofinal subset. Now, for each limit $\alpha$ we put $C_\alpha=\sigma_\alpha[r_\alpha]$. We claim that $\{C_\alpha|\; \alpha \in \lim(\omega_1)\}$ is a $\clubsuit_{\operatorname{w}^2}$-sequence.
		
		Indeed, fix $X \subseteq \omega_1$. There is a stationary set of indices $\alpha$, for which we can choose elementary submodels $M_\alpha \prec (\Hcountable; \in)$, such that:
		\begin{itemize}
			\item $X\cap \alpha, \sigma_\alpha \in M_{\alpha},$
			\item $r_\alpha$ is reaping over $M_{\alpha}$.
		\end{itemize}
		In particular, either $r_\alpha \subseteq^* \sigma_\alpha^{-1} [X\cap \alpha ]$, or $\sigma_\alpha^{-1} [X\cap \alpha ] \cap r_\alpha$ is finite. It follows that $C_\alpha = \sigma_\alpha[r_\alpha]$ is either almost contained in $X\cap \alpha$, or almost disjoint with it. \qedhere
	\end{proof}


\begin{thebibliography}{HD}
		\baselineskip=15pt
		
		\begin{small}
			
			
			
			\bibitem[1]{calderonfarah} D. Calder\'on, I. Farah, \textit{Can you take Akemann-Weaver's $\diamondsuit$ away?}, arXiv preprint arXiv:2006.06886 (2020)
			
			\bibitem[2]{top3} J. Cancino-Manr\'iquez, M. Hru\v{s}\'ak and D. Meza-Alc\'antara, \textit{Countable irresolvable spaces and cardinal invariants}, Topology Proceedings 44 (2014), 189–196
			
			\bibitem[3]{idealindependent} J. Cancino, O. Guzm\'an, A. W. Miller, \textit{Ideal independent families and the ultrafilter number}, The Journal of Symbolic Logic 86.1 (2021): 128-136
			
			\bibitem[4]{devlinshelah} K. Devlin, S. Shelah, \textit{A weak version of $\diamondsuit$ which follows from $2^{\aleph_0}<2^{\aleph_1}$}, Israel Journal of Mathematics, vol. 29 (1978)
			
			\bibitem[5]{sticks2} M. D\v{z}amonja, S. Shelah, \textit{Similar but not the same: various versions of $\clubsuit$ do not coincide}, The Journal of Symbolic Logic, vol. 64.1 (1999), 180–198 
			
			\bibitem[6]{union} D. Fern\'andez-Bret\'on, M. Hru\v{s}\'ak, \textit{A parametrized diamond principle and union ultrafilters}, arXiv preprint, arXiv:1706.00830 (2017)
			
			\bibitem[7]{sticks1} S. Fuchino, S. Shelah, L. Soukup, \textit{Sticks and clubs}, Ann. Pure Appl. Logic 90, 57–77 
			
			\bibitem[8]{top2} M.A. Gaspar-Arreola, F. Hern\'andez-Hern\'andez, M. Hru\v{s}ak, \textit{Scattered spaces from weak diamonds}, Israel Journal of Mathematics, vol. 225 (2018)
			
			\bibitem[9]{hrusak} M. Hru\v{s}\'ak, \textit{Another $\diamondsuit$-like principle}, Fund. Math. 167 (2001), 277-289
			
			
			\bibitem[10]{milden} H. Mildenberger, \textit{Finding generic filters by playing games}, Arch. Math. Logic 49 (2010), 91–118
			
			\bibitem[11]{cichon} H.Minami, \textit{Diamond principles in Cicho\'n's diagram}, Arch. Math. Log. 44.4 (2005), 513-526
			
			\bibitem[12]{minami} H. Minami, \textit{Suslin forcing and parametrized $\diamondsuit$ principles}, The Journal of Symbolic Logic, vol. 73.3 (2008), 752-764
			
			\bibitem[13]{mhd} J.T. Moore, M. Hru\v{s}\'ak, M. D\v{z}amonja, \textit{Parametrized $\diamondsuit$ principles}, Trans. of the AMS, vol. 356, no.6 (2003), 2281–2306
			
			
			
			\bibitem[14]{top1} C. Morgan and S. G. Da Silva, \textit{Covering properties which, under weak diamond principles, constrain the extents of separable spaces}, Acta Mathematica Hungarica 128 (2010), 358–368
			
			\bibitem[15]{clubad} A. Rinot, R. Shalev, \textit{A guessing principle from a souslin tree, with applications to topology}, Topology and its Applications (2022): 108296
			
			\bibitem[16]{clubad2} A. Rinot, R. Shalev, \textit{A new small Dowker space}, arXiv preprint, arXiv:2209.10504 (2022)
			
			
			\bibitem[17]{vojtas} P. Vojta\v{s}, \textit{Generalized Galois-Tukey-connections between explicit relations on classical objects of real analysis}, Freie Universität Berlin. Fachbereich Mathematik, 1991
			
		\end{small}
	\end{thebibliography}
\end{document}